\numberwithin{equation}{section}
\theoremstyle{plain}
\newtheorem{Th}{Theorem}[section]
\newtheorem{Lemma}[Th]{Lemma}
 \theoremstyle{definition}
\newtheorem{Def}[Th]{Definition}
\newtheorem{Rem}[Th]{Remark}
\newtheorem{?}[Th]{Problem}
\providecommand{\vol}[1]{\left\lvert#1\right\rvert}
\providecommand{\abs}[1]{\left\lvert#1\right\rvert}
\providecommand{\ang}[1]{\left\langle#1\right\rangle}
\providecommand{\parenth}[1]{\left(#1\right)}
\newcommand{\R}{\mathbb{R}} %%
\newcommand{\E}{\mathbb{E}} %%
\newcommand{\Pro}{\mathbb{P}} %%
\newcommand{\Sp}{\mathbb{S}} %%
\begin{document}

\setlength{\abovedisplayskip}{1em}
\setlength{\belowdisplayskip}{1em}

\title[Stoch. reverse isoperimetric inequalities in the plane]{Stochastic reverse isoperimetric inequalities in the plane}

\author[J. Rebollo Bueno]{Jesus
  Rebollo Bueno}

\address{University of Missouri \\ Department of Mathematics
  \\ Columbia MO 65201}

\email{jrc65@mail.missouri.edu}

% \subjclass[]{}

 \keywords{}

\begin{abstract}
   In recent years, it has been shown that some classical inequalities follow from a local stochastic dominance for naturally associated random polytopes. We strengthen planar isoperimetric inequalities by attaching a stochastic model to some classical inequalities, such as Mahler's Theorem, and a reverse Lutwak-Zhang inequality, the polar for $L_p$ centroid bodies. In particular, we obtain the dual counterpart to a result of Bisztriczky-B\"or\"oczky.
\end{abstract}

\maketitle

\section{Introduction}
\label{Intro}

In this paper we study functionals of convex bodies invariant under the general linear group, $GL(n)$. To pave the way, we begin with a fundamental example. Let $K$ be a convex body in $\R^n$, i.e., a compact convex set with non-empty interior. Whenever $K$ is symmetric with respect to the origin the volume product is defined by
    \begin{equation}
    \label{Volume-Product}
        P(K)
        :=
        \vol{K}\vol{K^{\circ}},
    \end{equation}
where $\vol{\cdot}$ denotes the $n$-dimensional Lebesgue measure and $K^{\circ}$ is the polar body of $K$ (see \S\ref{Prelim} for precise definitions). Since $P$ is continuous and $GL(n)$ invariant, a compactness argument shows that it attains a maximum and a minimum. The upper estimate is known as the Blaschke-Santal\'o inequality \cite{Bla:1923-II,San:1949,meypajblaschke}:
    \begin{equation}
    \label{Symmetric-Blaschke-Santalo}
        P(K)
        \leq
        P(B_2^n),
    \end{equation}
where $B_2^n$ denotes the $n$-dimensional Euclidean unit ball centered at the origin. In addition, Alexander, Fradelizi, and Zvavitch \cite{AleFraZva:2019-Polytope} recently showed that among polytopes simplicial ones are maximizers; see \cite{MeyRei:2011-Polygons} for earlier work in the plane. On the other hand, the lower estimate is known as the Mahler's Conjecture, confirmed by Mahler for dimension 2 \cite{Mah:1939-Minimal}, and it is still one of the main open problems in convex geometry:
\begin{equation}\label{Mahler-Conjecture}
	P(K) \geq P(L),
\end{equation}
where $L$ denotes the $n$-parallelotope for the symmetric case, or a simplex for the non-symmetric one. A recent breakthrough by Iriyeh and Shibata \cite{IriShi:2017symmetric} solves it in dimension $3$ (see Fradelizi, Hubard, Meyer, Rold\'{a}n-Pensado, and Zvavitch \cite{FraHubMeyRol:2019Equipartitions} for a shorter proof). The $n$-dimensional statement is known in special cases. Saint Raymond established it for unconditional bodies \cite{Saint:1980} (see Meyer \cite{Mey:1986-Caracterisation} for a simpler proof). Reisner proved the result for zonoids \cite{Rei:1986-Zonoids}. For local versions and other known special cases see \cite{BarFra:2013-Volume,Kar:2019-Mahler,KimRei:2011-Simplex,LopRei:1998-Mahler,NazPetRyaZva:2010,Stan:2009-Mahler,ReiSchWer:2012-Mahler}.
Moreover, the celebrated result by Bourgain and Milman \cite{BouMil:1987} established \eqref{Mahler-Conjecture} up to a constant. See \cite{GiaPaoVri:2014-Isotropic,Naz:2012-BM,Pis:1999-Volume,Sar:2010-Characterizations} for other proofs and related results. The sharpest known constant is due to Kuperberg \cite{Kup:2008}.

On the topic of functional inequalities related to convex bodies, Paouris and Pivovarov studied stochastic forms of isoperimetric inequalities \cite{paopivprob}. Subsequently, Cordero-Erausquin, Fradelizi, Paouris, and Pivovarov \cite{CFPP} showed randomized inequalities for polar bodies. A typical example of such random sets is given by the convex hull of the columns of a random matrix, for which the expectation of the volume is maximized by $N$ independent random vectors uniformly distributed in the Euclidean ball of volume one.
In particular, motivated by the work in Stochastic Geometry \cite{Bla:1917,Bus:1953,Gro:1974,camcolgroanote,Cam-Gron:2006volume,campigronchionvolume}, they gave a stochastic Blashke-Santal\'{o} inequality. Let $K$ be a symmetric convex body in $\R^n$, $\{X_i\}_{i=1}^N$ random vectors sampled uniformly in $K$, and $\{Y_i\}_{i=1}^N$ sampled uniformly in $K^*$, the Euclidean ball of the same volume as $K$. Then
    \[
        \E\vol{([K]_N)^{\circ}} \leq \E\vol{([K^*]_N)^{\circ}},
    \]
where $[K]_N$ stands for ${\rm conv}\{\pm X_1,\dots,\pm X_N\}$ and similarly for $[K^*]_N$.
One may find the origin of this in 1864, when J.J. Sylvester \cite{Syl:1864} posed his four points problem, which ultimately led to the study of
    \begin{equation}
    \label{sylvestersfunctional}
        M(K)
        =
        \dfrac{1}{\vol{K}}\E\vol{[K]_{N}},
    \end{equation}
for $N\geq n+1$. Thus, Sylvester's problem is equivalent to study the extremals of $M(K)$.
Blaschke \cite{Bla:1917} showed that in two dimensions the minimum is attained if and only if $K$ is an ellipse, and triangles are the only maximizers of \ref{sylvestersfunctional} for $N=4$ (see \cite{DalLar:1990-Volumes} for the extension to $N$ points on the plane).
Indeed, since $M(K)$ is not increasing under Steiner symmetrization \cite{Gro:1973-Some}, ellipsoids are the only minimizers for the $n$-dimensional Sylvester's problem.
The maximum problem is still open for $n>3$ and, as mentioned before, it is conjectured that the simplices are the only maximizers.
Moreover, Campi, Colesanti, and Gronchi used RS-Movements (see \S\ref{Prelim}) to determine maximizers of the $r$-th order moment of \eqref{sylvestersfunctional} in the plane for $N>n$. See Meckes \cite{Mec:2005-Sylvester} for the symmetric case.

We are interested in the polar version of Sylvester's functional. Let $K$ be a centrally symmetric convex body in $\R^n$, $\{X_i\}_{i=1}^n$ independent random vectors sampled uniformly in $K$. We study a generalization of the functional
     \begin{equation*}
        W(K)
        =
        \dfrac{1}{\vol{K}}\E\vol{([K]_{N})^{\circ}}^{-1},
     \end{equation*}
where $N>n$. In particular, we are interested in the functional of the normalized higher order moments of the volume of the polar of a random polytope in a centrally symmetric convex body in $\R^n$. Such functional, as Sylvester's, is continuous with respect to the Hausdorff metric and invariant under invertible linear transformations.

The convexity of $W(K)$ under RS-Movements, Theorem \ref{maintheorem}, together with the setting beyond convex hull from Paouris and Pivovarov \cite{paopivprob}, provides a path to prove stochastic inequalities on the plane that strengthen planar isoperimetric inequalities. In particular, for centrally symmetric convex bodies $K$ in $\R^2$ with the origin as an interior point and $N\geq2$, we are able to find a stochastic Mahler's inequality.
\begin{Th}
\label{Stochastic-Symmetric-Mahler}
	Let $N\geq2$, $r\geq1$, and $K$ a centrally symmetric convex body in $\R^2$. Then
        \begin{equation}
        \label{Stochastic-Mahler}
            \E\vol{([K]_N)^{\circ}}^{-r} \leq \E\vol{([Q]_N)^{\circ}}^{-r},
        \end{equation}
     where $Q$ is a square with $\vol{K}=\vol{Q}$.
\end{Th}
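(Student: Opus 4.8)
The plan is to run a planar shadow-system reduction on centrally symmetric polygons, in the spirit of Campi, Colesanti, and Gronchi's analysis of Sylvester's functional, and to control the functional $F(K):=\E\vol{([K]_N)^{\circ}}^{-r}$ at every step by means of Theorem \ref{maintheorem} (convexity of the normalized moment functional under RS-movements). Observe first that $0<F(K)\le\vol{K^{\circ}}^{-r}<\infty$, since $[K]_N\subseteq K$ forces $([K]_N)^{\circ}\supseteq K^{\circ}$; moreover $F$ is continuous in the Hausdorff metric and invariant under volume-preserving linear maps, the last point because $([TK]_N)^{\circ}$ has the law of $(T^{*})^{-1}([K]_N)^{\circ}$, so that $F(TK)=|\det T|^{r}F(K)$. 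Since every centrally symmetric convex quadrilateral is a linear image $T([-1,1]^{2})$ with $|\det T|$ fixed by its area, this already yields $F(P)=F(Q)$ for every parallelogram $P$ with $\vol P=\vol Q$; and by Hausdorff continuity it suffices to prove \eqref{Stochastic-Mahler} when $K$ is a centrally symmetric $2m$-gon. I would then induct on the number $2m$ of vertices, the case $m=2$ being the parallelogram identity just obtained.

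For the inductive step, let $K$ have vertices $\pm v_1,\dots,\pm v_m$ in cyclic order with $m\ge 3$, so the two neighbours of $v_m$ are $v_{m-1}$ and $-v_1$. Put $u:=v_1+v_{m-1}$, which is nonzero (the $2m$ vertices are distinct) and parallel to the chord $[v_{m-1},-v_1]$, and consider the RS-movement
\[
K_t := \conv{\pm v_1,\dots,\pm v_{m-1},\pm(v_m+tu)}, \qquad K_0=K .
\]
A direct area computation shows that the area of $K_t$ depends on the moving vertex only through $\det[u,\,v_m+tu]=\det[u,v_m]$, so $\vol{K_t}$ is constant in $t$; and since $u$ points along the chord $[v_{m-1},-v_1]$, driving $v_m$ far enough in either sense makes a neighbour of $v_m$ non-extreme, so there exist $t_1<0<t_2$ with $K_{t_1}$ and $K_{t_2}$ centrally symmetric polygons having at most $2(m-1)$ vertices and the same area as $K$. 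As $\vol{K_t}$ is constant, Theorem \ref{maintheorem} makes $t\mapsto F(K_t)$ convex on $[t_1,t_2]$, so $F(K)=F(K_0)\le\max\{F(K_{t_1}),F(K_{t_2})\}$. Replacing $K$ by whichever of $K_{t_1},K_{t_2}$ attains the maximum — it has fewer vertices and area $\vol Q$ — and invoking the induction hypothesis closes the step, and with it the proof. (Any non-generic coincidence at $t_1$ or $t_2$ only drops the vertex count further, and can in any case be removed beforehand by an arbitrarily small centrally symmetric, area-preserving perturbation of $K$ together with a limiting argument using the continuity of $F$.)

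The main obstacle is Theorem \ref{maintheorem} itself. Its proof must show that along an RS-movement the reciprocal polar volume $t\mapsto\vol{([K_t]_N)^{\circ}}^{-1}$ is convex for each fixed realization of the sampling randomness — so that, $x\mapsto x^{r}$ being convex and increasing on $[0,\infty)$ for $r\ge 1$, the $r$-th moment functional stays convex after taking expectations — and this is where polarity, the affine dependence of the random vertices on the shadow parameter, and Meyer--Reisner-type convexity of polar volume along shadow systems must be combined, together with the measure-theoretic bookkeeping carried out in the ``beyond convex hull'' framework of Paouris and Pivovarov \cite{paopivprob}. By contrast, the combinatorial reduction above — constancy of area along the RS-movement and the loss of a pair of vertices when $v_m$ is pushed far enough — is the standard fare of planar shadow-system arguments and presents no essential difficulty.
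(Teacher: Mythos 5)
Your proposal is correct and follows essentially the same route as the paper: the paper proves \eqref{Stochastic-Mahler} by applying its Lemma~\ref{quadrilateralsmaximize}, whose proof is exactly your area-preserving RS-movement that pushes a vertex parallel to the chord joining its two neighbours, invokes the convexity from Theorem~\ref{maintheorem} to bound the functional by its value at the endpoints, iterates to reach a parallelogram, and finishes with linear invariance and Hausdorff continuity. Your version merely inlines that lemma with somewhat more explicit bookkeeping (constancy of area along the movement, the $GL(2)$-scaling $F(TK)=|\det T|^{r}F(K)$), so there is nothing substantively different to compare.
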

We will also be able to derive a stochastic reverse Lutwak-Zhang's inequality on the plane. The deterministic inequality on the plane was shown by Campi and Gronchi \cite{campigronchionvolume}. Let $K$ be a star body about the origin in $\R^n$ and $1\leq p\leq\infty$. The $p$-centroid body of $K$, $Z_pK$, is defined to be the convex body given by the function
    \begin{equation}
    \label{polar-p-centroid-body}
      \displaystyle h(Z_pK,x)
      =
      \left(
      \dfrac{1}{\vol{K}}\int_K \abs{\ang{x,z}}^p dz
      \right)^{1/p}.
    \end{equation}
In 1997 Lutwak and Zhang \cite{LutZha-1997-Blaschke} showed, with a different normalization, that
    \begin{equation}
    \label{Lutwak-Zhang}
        \vol{(Z_pK)^{\circ}} \leq \vol{(Z_pK^*)^{\circ}}.
    \end{equation}
Thus, one can think of the latter as a generalization of the Blaschke-Santal\'{o} inequality \eqref{Symmetric-Blaschke-Santalo}, since it can be obtained as the limit case $p=\infty$ for \eqref{Lutwak-Zhang}. For more recent developments see \cite{LutYanZha:2000-Lp,LutYanZha:2010-Orlicz}.

In order to give a stochastic form of \eqref{Lutwak-Zhang} on the plane, let $K$ be a centrally symmetric convex body in $\R^2$, $N\geq 2$, $\{X_i\}_{i=1}^N$ independent random vectors uniformly distributed in $K$, $r\geq1$, and $1\leq p\leq\infty$. We define the empirical $p$-centroid body of $K$, $Z_{p,N}(K)$, by its (random) support function
    \begin{equation}
    \label{Empirical-p-Centroid-Body}
        h^p(Z_{p,N}(K),z)
        =
        \frac{1}{N}\displaystyle\sum\limits_{i=1}^N\abs{\ang{X_i,z}}^p.
    \end{equation}
We denote $Z^{\circ}_{p,N}(K)=(Z_{p,N}(K))^{\circ}$, and $Z_NK$ the empirical centroid body of $K$, case $p=1$.

\begin{Th}
\label{Stochastic-Reverse-Lutwak-Zhang}
    Let $K$ be a centrally symmetric convex body in $\R^2$. Then
        \begin{equation}
        \label{Eq-Stochastic-Reverse-Lutwak-Zhang}
        \E\vol{Z^{\circ}_{p,N}(K)}^{-r}
        \leq
        \E\vol{Z^{\circ}_{p,N}(Q)}^{-r},
        \end{equation}
    where $\vol{K}=\vol{Q}$. In particular, when $N\to\infty$
        \begin{equation}
        \label{Reverse-Lutwak-Zhang}
        \vol{(Z_pK)^{\circ}} \geq \vol{(Z_pQ)^{\circ}}.
        \end{equation}
\end{Th}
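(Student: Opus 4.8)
\noindent\emph{Plan of proof.} The plan is to exhibit $F(K):=\E\vol{Z^{\circ}_{p,N}(K)}^{-r}$ as a functional that is convex along volume‑preserving RS‑movements, and then to push any body to a parallelogram by the Campi–Colesanti–Gronchi reduction. First record the affine behaviour: if $X_i$ is uniform in $K$ then $TX_i$ is uniform in $TK$, and since $h^{p}(Z_{p,N}(TK),z)=\frac1N\sum_i\abs{\ang{TX_i,z}}^{p}=h^{p}(Z_{p,N}(K),T^{*}z)$ one gets $Z_{p,N}(TK)=TZ_{p,N}(K)$ for every realization, hence $Z^{\circ}_{p,N}(TK)=(T^{*})^{-1}Z^{\circ}_{p,N}(K)$ and $F(TK)=\abs{\det T}^{r}F(K)$. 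In particular $F$ is invariant under volume‑preserving linear maps, so it suffices to prove that among centrally symmetric convex bodies of a fixed area $v$ the supremum of $F$ is attained at a parallelogram: every parallelogram of area $v$ is a volume‑preserving linear image of $Q$, so all of them carry the common value $F(Q)$, and the desired inequality becomes $F(K)\le F(Q)$.

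\noindent The heart of the matter is the convexity statement: for any volume‑preserving RS‑movement $(K_t)_{t\in[0,1]}$ of a centrally symmetric convex body, $t\mapsto F(K_t)$ is convex. Fix a realization $X_1,\dots,X_N$. The movement is, for each $t$, a measure‑preserving map $x\mapsto x+t\,\lambda(x)\theta$ of $K_0$ onto $K_t$, so the images $X_i^{(t)}$ are uniform in $K_t$ and $\ang{X_i^{(t)},z}=\ang{X_i,z}+t\,\lambda(X_i)\ang{\theta,z}$ is affine in $t$ for every $z$. Therefore, with $v_t(z)=(\ang{X_i^{(t)},z})_{i=1}^{N}\in\R^{N}$,
    \[
        h\bigl(Z_{p,N}(K_t),z\bigr)=N^{-1/p}\,\norm{v_t(z)}_{\ell_p^{N}}
    \]
(the $\ell_\infty^N$‑norm $\max_i\abs{\cdot}$ when $p=\infty$) is the $\ell_p^{N}$‑norm of an affine function of $t$, hence convex in $t$ for every $z$; that is, $(Z_{p,N}(K_t))_t$ is itself a shadow system of origin‑symmetric convex bodies. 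The polar volume of a planar shadow system of origin‑symmetric convex bodies is harmonically convex, i.e.\ $t\mapsto\vol{L_t^{\circ}}^{-1}$ is convex — this is the planar instance of the mechanism underlying \cite{campigronchionvolume} and Theorem \ref{maintheorem}. Applying it to $L_t=Z_{p,N}(K_t)$, raising to the power $r\ge1$ (which preserves convexity of nonnegative convex functions), and taking the expectation over $(X_i)$ gives the convexity of $t\mapsto F(K_t)$. Parameters $t$ for which $Z_{p,N}(K_t)$ loses the origin from its interior cause no trouble, since $\vol{Z^{\circ}_{p,N}(K_t)}^{-r}$ extends continuously by $0$ and convexity is closed under pointwise limits. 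This is exactly where the ``beyond convex hull'' framework of Paouris–Pivovarov \cite{paopivprob} is used: $K\mapsto Z_{p,N}(K)$ is affine‑covariant and carries shadow systems to shadow systems, so the argument of Theorem \ref{maintheorem} applies in this setting. I expect this paragraph — verifying the shadow‑system inheritance of the empirical $p$‑centroid body (including $p=\infty$) and invoking harmonic convexity of polar volume — to be the main obstacle.

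\noindent Granting convexity, a convex function on $[0,1]$ attains its maximum at an endpoint, so replacing $K$ by one of the two endpoints of a suitable RS‑movement does not decrease $F$. Approximating $K$ in the Hausdorff metric by centrally symmetric polygons (using continuity of $F$) and then, for a symmetric polygon with $2m$ vertices, $m\ge3$, moving one pair of opposite vertices along the line parallel to the chord joining their neighbours — a volume‑preserving RS‑movement — one reaches at an endpoint a symmetric polygon with fewer vertices; iterating this, as in \cite{camcolgroanote,campigronchionvolume}, one arrives at a parallelogram, on which $F$ equals $F(Q)$. Hence $F(K)\le F(Q)$, which is \eqref{Eq-Stochastic-Reverse-Lutwak-Zhang}.

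\noindent For the limit \eqref{Reverse-Lutwak-Zhang}, note that $X_i\in K$ and $K$ symmetric give $h(Z_{p,N}(K),z)\le h(K,z)$, so $Z_{p,N}(K)\subseteq K$ and $\vol{Z^{\circ}_{p,N}(K)}^{-r}\le\vol{K^{\circ}}^{-r}$ deterministically. By the strong law of large numbers applied to the uniformly bounded support functions, $h^{p}(Z_{p,N}(K),z)\to\frac1{\vol K}\int_K\abs{\ang{x,z}}^{p}\,dx$ uniformly on $S^{1}$ almost surely, i.e.\ $Z_{p,N}(K)\to\vol{K}^{1/p}Z_pK$ in the Hausdorff metric, whence $\vol{Z^{\circ}_{p,N}(K)}^{-r}\to\vol{K}^{-2r/p}\vol{(Z_pK)^{\circ}}^{-r}$ almost surely. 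Dominated convergence turns \eqref{Eq-Stochastic-Reverse-Lutwak-Zhang} into $\vol{K}^{-2r/p}\vol{(Z_pK)^{\circ}}^{-r}\le\vol{Q}^{-2r/p}\vol{(Z_pQ)^{\circ}}^{-r}$, and since $\vol{K}=\vol{Q}$ this is precisely $\vol{(Z_pK)^{\circ}}\ge\vol{(Z_pQ)^{\circ}}$.
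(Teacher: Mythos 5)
Your proposal is correct and follows essentially the same route as the paper: convexity of the functional along volume-preserving RS-movements (the paper obtains this by writing $Z_{p,N}(K)=N^{-1/p}[\mathbf{X}]B_q^N$ and invoking Lemma~\ref{quadrilateralsmaximize} with $C=B_q^N$, whereas you re-derive it by observing that $(Z_{p,N}(K_t))_t$ is a shadow system and applying Campi--Gronchi), followed by the same iterated vertex-shift reduction to a parallelogram and the same SLLN/dominated-convergence passage to the limit. Two small points worth tightening: convexity of $h(L_t,z)$ in $t$ for each fixed $z$ does not by itself make $(L_t)$ a shadow system --- what does is that your support function has the form $h_{\tilde L}(z,t\ang{\theta,z})$ with $\tilde L=N^{-1/p}[(X_1,\lambda_1)\cdots(X_N,\lambda_N)]B_q^N\subset\R^{3}$ a fixed convex body --- and, with the paper's normalization \eqref{polar-p-centroid-body}, the almost-sure limit is $Z_pK$ itself rather than $\vol{K}^{1/p}Z_pK$, though your stray factors cancel anyway since $\vol{K}=\vol{Q}$.
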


Bisztriczky and B\"{o}r\"{o}czky \cite{BisBor-2001-Centroid} provided a planar converse of the Busemann-Petty centroid inequality \cite{Bus:1953,BusStr:1960-Area,Pet:1961-Centroid,Pet:1971-Isoperimetric}. We recall the latter states that given an origin symmetric convex body $K\subset\R^n$ then
    \[
        \vol{ZK} \geq \vol{ZK^*}.
    \]
where $ZK$ denotes the centroid body, i.e., the case $p=1$. Bisztriczky and B\"{o}r\"{o}czky showed that given a centrally symmetric convex body $K\subset\R^2$, with $\vol{K}=\vol{Q}$, then
    \[
        \vol{ZK} \leq \vol{ZQ}.
    \]
Theorem~\ref{Stochastic-Reverse-Lutwak-Zhang} for $p=1$ gives a stochastic polar version of such inequality, that is
        \begin{equation}
        \label{Eq-Stochastic-Polar-Bis-Bor}
            \mathbb{E}\vol{Z^{\circ}_{N}(K)}^{-r} \leq \vol{Z^{\circ}_{N}(Q)}^{-r}.
        \end{equation}
When $N\to\infty$ and $r=1$, one obtains the mentioned deterministic inequality
        \begin{equation}
        \label{Polar-Bis-Bor}
            \vol{(ZK)^{\circ}}
            \geq
            \vol{(Z Q)^{\circ}}.
        \end{equation}
Furthermore, we will be able to generalize Theorem \ref{maintheorem} for non-symmetric convex bodies $K$ in $\R^n$ considering a generalization of the functional
    \begin{equation*}
        W^{s\circ}(K)
        =
        \dfrac{1}{\vol{K}}\E\vol{([K]_{N})^{s\circ}}^{-1},
    \end{equation*}
where $N\geq n+1$, $[K]_{N}=\mathop{\rm conv}\{  X_1,\ldots, X_{N}\}$, and $K^{s\circ}$ denotes the polar body of $K$ with respect to its Santal\'o point. This will allow us to prove a stochastic non-symmetric Mahler's inequality.

\begin{Th}
\label{Stochastic-Non-Symmetric-Mahler}
	Let $K$ be a convex body in $\R^2$, $r\geq1$, and $N\geq3$. Then
        \begin{equation}
        \label{Stochastic-Mahler-Santalo}
            \E\vol{([K]_N)^{s\circ}}^{-r} \leq \E\vol{([T]_N)^{s\circ}}^{-r},
        \end{equation}
where $T$ denotes a triangle with centroid at the origin and $\vol{T}=\vol{K}$.
\end{Th}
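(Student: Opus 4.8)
The plan is to obtain Theorem~\ref{Stochastic-Non-Symmetric-Mahler} from the convexity of $W^{s\circ}$-type functionals under RS-movements (the non-symmetric version of Theorem~\ref{maintheorem}) by transporting an arbitrary planar convex body $K$ to a triangle of the same area along a finite chain of \emph{area-preserving} shadow systems, each chosen so that $t\mapsto\E\vol{([K_t]_N)^{s\circ}}^{-r}$ is convex on it (hence bounded above by its values at the two endpoints) and so that one endpoint of each step is a convex body strictly ``simpler'' than the body we started the step with. Two preliminary reductions set the stage. First, the functional is invariant under affine maps of determinant $\pm1$ (in particular under translations): if $A(x)=Lx+c$ with $|\det L|=1$, then $[AK]_N$ has the same law as $A[K]_N$, the Santal\'o point satisfies $s(AP)=As(P)$ and polarity about it is contravariant, so $\vol{([AK]_N)^{s\circ}}=\vol{([K]_N)^{s\circ}}$. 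Consequently it suffices to produce \emph{some} triangle $T'$ with $\vol{T'}=\vol K$ and $\E\vol{([K]_N)^{s\circ}}^{-r}\le\E\vol{([T']_N)^{s\circ}}^{-r}$, since any two equal-area triangles differ by such a map and $T'$ may then be replaced by the normalized $T$. Second, because $N\ge3$ the sample $X_1,\dots,X_N$ is a.s.\ not collinear, $[K]_N$ is a.s.\ a convex body, and the inclusion $[K]_N\subseteq K$ forces $\vol{([K]_N)^{s\circ}}\ge\vol{K^{s\circ}}>0$; therefore $\vol{([K]_N)^{s\circ}}^{-r}$ is a \emph{bounded} random variable, every expectation below is finite, and $K\mapsto\E\vol{([K]_N)^{s\circ}}^{-r}$ is continuous in the Hausdorff metric (a Skorokhod coupling plus dominated convergence, using continuity of $[\,\cdot\,]_N$, of the Santal\'o point, and of polarity). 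Continuity lets me approximate $K$ by convex polygons, rescaling to keep the area fixed, so I may assume $K$ is a convex $m$-gon.

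The core move is vertex elimination. Let $P$ be a convex $m$-gon, $m\ge4$, with consecutive vertices $v_1,\dots,v_m$, and let $P_t$ be the polygon with $v_m$ replaced by $v_m+tu$, where $u$ points along the chord $[v_{m-1},v_1]$; this is a shadow system, and I would check that it is an RS-movement in the sense of \S\ref{Prelim} (or a finite composition of such) so that Theorem~\ref{maintheorem} applies to it. Because the apex of the triangle $v_{m-1}\,(v_m+tu)\,v_1$ slides parallel to the base $[v_{m-1},v_1]$, the area of $P_t$ is constant in $t$. Let $[a,b]\ni0$ be the maximal interval on which $P_t$ is convex; at one endpoint $v_m+tu$ becomes collinear with the pair $\{v_{m-2},v_{m-1}\}$ and at the other with $\{v_1,v_2\}$, so $P_a$ and $P_b$ are convex polygons with at most $m-1$ vertices (the non-generic alignments in which the vertex count fails to drop are removed by perturbing $P$ or by a limiting argument). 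Theorem~\ref{maintheorem} makes $t\mapsto\E\vol{([P_t]_N)^{s\circ}}^{-r}$ convex on $[a,b]$, so
\[
\E\vol{([P]_N)^{s\circ}}^{-r} \le \max\Big(\E\vol{([P_a]_N)^{s\circ}}^{-r},\, \E\vol{([P_b]_N)^{s\circ}}^{-r}\Big).
\]
Repeating this $m-3$ times, always passing to a polygon with one fewer vertex, produces a triangle $T'$ of area $\vol K$ with $\E\vol{([K]_N)^{s\circ}}^{-r}\le\E\vol{([T']_N)^{s\circ}}^{-r}$; the affine-invariance remark identifies the right-hand side with $\E\vol{([T]_N)^{s\circ}}^{-r}$, which is the theorem. (Theorem~\ref{Stochastic-Symmetric-Mahler} would follow by the same scheme, moving the antipodal pair $\{v_m,-v_m\}$ together to stay among centrally symmetric bodies and terminating at a parallelogram.)

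The substantive work is already packaged into Theorem~\ref{maintheorem}, which I am allowed to assume, so the obstacles proper to this deduction are two and comparatively mild. The first is verifying that the single-vertex parallel move above is genuinely of the RS-movement type for which Theorem~\ref{maintheorem} is proved — matching definitions, or splitting the move into admissible pieces. The second is the analytic bookkeeping: justifying the polygonal approximation and the endpoint limits $t\to a,b$ in the presence of the negative exponent $-r$, and the Hausdorff-continuity of the functional; here the uniform lower bound $\vol{([K]_N)^{s\circ}}\ge\vol{K^{s\circ}}$ is exactly the estimate that makes dominated convergence and all of these limits routine.
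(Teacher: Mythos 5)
Your proposal is correct and follows essentially the same route as the paper: affine invariance and Hausdorff continuity of the functional reduce to convex polygons, a single-vertex parallel slide along the chord joining its neighbours is an area-preserving RS-movement on which the functional is convex (hence maximized at an endpoint with one fewer vertex), and iteration terminates at a triangle --- this is exactly the paper's Lemma~\ref{trianglesextremizers} combined with Theorem~\ref{santalotheo}. The only caveat is that the convexity you invoke mid-argument must be the non-symmetric version, Theorem~\ref{santalotheo} (resting on the Meyer--Reisner result for $\vol{K_t^{s\circ}}^{-1}$), not Theorem~\ref{maintheorem} itself, which you do acknowledge at the outset.
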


We make special use of the work by Rogers and Shephard \cite{rogsheext,Shephard:1964shadow} and developed by Campi, Colesanti, and Gronchi \cite{camcolgroanote,Cam-Gron:2002reverse,campigronchionvolume,Cam-Gron:2006volume}. For more about shadow systems see Saroglou \cite{Sar:2013-shadow}.

\section{Preliminaries}
\label{Prelim}

A set is (centrally) symmetric if $K=-K$. Let $K$, $L$ be two sets in $\R^n$, their Minkowski sum is given by
    \[
    K+L = \{ k+l : k\in K, l\in L\},
    \]
and the Hausdorff distance between $K$ and $L$ by
    \[
    \delta^H(K,L) = \inf\{\epsilon>0 : K\subset L+\epsilon B_2^n, L\subset K+\epsilon B_2^n\}.
    \]

We call a set convex if for all $\lambda\in(0,1)$ and $x,y\in\R^n$, then $\lambda{x}+(1-\lambda)y\in{K}$. Let $K$ be a convex set, $\theta\in\Sp^{n-1}$, and
$P_{\theta^{\perp}}$ the orthogonal projection onto
$\theta^{\perp}$. We define the upper function of $K$ with respect to
$\theta$ as
\begin{eqnarray*}
  \label{upperfunction}
  u_{K,\theta}:P_{\theta^{\perp}}K&\longmapsto&\R\\ u_{K,\theta}(y)&=&
  \sup\{\lambda:y+\lambda\theta\in K\}.
\end{eqnarray*}
Analogously we define the lower function of $K$ with respect to
$\theta$ as
\begin{align*}
  \label{lowerfunction}
  \ell_{K,\theta}:&P_{\theta^{\perp}}K\longmapsto\R\\
  &\ell_{K,\theta}(y)
  =\inf\{\lambda:y+\lambda\theta\in K\}.
\end{align*}
Whenever $K$ and $\theta$ are clear by the context we will just write $P$, $u$, and $\ell$. Also notice that, for $K$ convex, $u$ and $\ell$ are concave and convex, respectively.

%We recall, the Steiner Symmetral of a non-empty compact set
%$A\subset\R^n$ with respect to $\theta^{\perp}$,
%$S_{\theta^{\perp}}A$, is the set such that for each line
%$G$ orthogonal to $\theta^{\perp}$ and meeting $A$, the set $G\cap
%S_{\theta^{\perp}}A$ is a closed segment with midpoint on
%$\theta^{\perp}$, and length equal to that of the set $G\cap A$. The
%mapping $S_{\theta^{\perp}}:A\mapsto S_{\theta^{\perp}}A$ is called
%the Steiner Symmetrization of $A$ with respect to $\theta^{\perp}$. In
%particular, if $K$ is a convex body
%\begin{equation*}
%	S_{\theta^{\perp}}K = \{x+\lambda z:x\in PK, -\dfrac{u(x)-\ell(x)}{2} \leq \lambda \leq \dfrac{u(x)+\ell(x)}{2}\}.
%\end{equation*}
%This shows that $S_{\theta^{\perp}}K$ is convex, since the function $u-\ell$ is concave. Moreover, $S_{\theta^{\perp}}K$ is symmetric with respect to $\theta^{\perp}$, closed, and by Fubini's theorem it has the same volume as $K$.

Let $\langle \cdot,\cdot \rangle$ be the usual inner product, the support function of a convex set is defined as
    \[
    h_K(w) = \underset{x\in K}{\max} \langle w,x \rangle, \forall w\in\R^n.
    \]
One can define the $p$-centroid body of $K$ by its support function as
    \begin{equation}
    \label{p-centroid-body}
        \displaystyle h_{Z_pK}(z) = \left( \dfrac{1}{\vol{K}}\int_K \abs{\langle x,z \rangle}^p dx\right)^{1/p}.
    \end{equation}
Thus, the volume of $Z_pK$ and $Z_p\Phi K$ is the same whenever $\Phi$ is a linear transformation with determinant one. Indeed, the centroid body itself is an affine equivariant, see \cite{LutYanZha:2000-Lp}.

The polar set, $K^{\circ}$, of $K$ is given by
\begin{equation}\label{Polar-Body}
	K^{\circ} = \{\omega \in\R^n : \langle \omega,x \rangle \leq 1, \forall x\in K\}.
\end{equation}
Notice that $K^{\circ}$ depends on the location of the origin and it follows from the definition that if $K$ contains the origin $(K^{\circ})^{\circ} = K$. In addition, a convex set is said to be a convex body if it is also compact with non-empty interior. In such case, the volume of the polar body of $K$ can be determined using the support function by
    \begin{equation}\label{polarvolume}
        \vol{K^{\circ}} = \dfrac{1}{n}\displaystyle\int_{\Sp^{n-1}} h_{K}^{-n}(x)\,dx,
    \end{equation}
where $\Sp^{n-1}$ denotes the unit sphere in $\R^n$, and $|\cdot|$ the Lebesgue measure on $\R^n$. For the case of a non-symmetric convex body we define
    \[
    K^{s\circ} = (K-s)^{\circ} = \{\omega\in\R^n : \langle \omega,x-s \rangle \leq 1, \forall x\in K\},
    \]
where $s$ denotes the Santal\'o point of $K$, i.e. the unique point in the interior of $K$ such that
    \[
    \vol{K^{s\circ}} = \underset{x\in\mathop{\rm int(K)}}{\min}\{\vol{(K-x)^{\circ}}\}.
    \]

For $\{x_1,\ldots,x_N\}\subset\R^n$ we denote by $[\mathbf{x}]:=[x_1\cdots x_N]$ the linear operator from $\R^N$ to $\R^n$, and $[\mathbf{x}]C$ the set
	\[
	[\mathbf{x}]C = \left\{ \displaystyle\underset{i=1}{\overset{N}{\sum}} c_ix_i : c=(c_i)\in C \right\} \subset \R^n.
	\]
An example of this is when we consider $C=B_1^N$, then
    \[
    [\mathbf{x}]B_1^N = \mathop{\rm conv}\{ \pm x_1,\ldots,\pm x_N\},
    \]
where $\mathop{\rm conv}$ stands for convex hull.

\section{Dual Version of Sylvester's Functionals}

We begin by recalling the notion of shadow system introduced by Rogers and Shephard \cite{rogsheext}.
\begin{Def}
    A shadow system along a direction $\theta\in \Sp^{n-1}$ is a family of convex sets $K_t\subset\R^n$ defined by
    \[
    K_t = \mathop{\rm conv}\{x+t\alpha(x)\theta : x\in A\subset \R^n\},
    \]
    where $A$ is an arbitrary bounded set of points, $\alpha$ is a bounded function on $A$, and $t$ belongs to an interval of the real axis.
\end{Def}
\noindent Rogers and Shephard \cite{rogsheext,Shephard:1964shadow} proved that the volume of $K_t$ is a convex function of $t$, and many isoperimetric type inequalities have been shown using this technique. In the dual case, Campi and Gronchi showed the following fundamental theorem \cite{campigronchionvolume}:
\begin{Th}\label{camgroth}
    If $K_t$, $t\in[0,1]$, is a shadow system of origin symmetric convex bodies in $\R^n$, then $|K_t^{\circ}|^{-1}$ is a convex function of $t$.
\end{Th}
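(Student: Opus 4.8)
The plan is to realize the whole shadow system as a single fixed body one dimension up, and then pass to polar bodies, which converts projections into sections. Set $\widehat K=\conv{(x,\alpha(x)):x\in A}\subset\R^{n+1}$ and let $\pi_t\colon\R^{n+1}\to\R^n$ be the surjection $\pi_t(y,s)=y+ts\theta$; from the definition of a shadow system one has $K_t=\pi_t(\widehat K)$ for every $t$. Because each $K_t$ is origin symmetric, replacing $\widehat K$ by $\tfrac{1}{2}(\widehat K-\widehat K)$ preserves the identity $\pi_t(\widehat K)=K_t$ while making $\widehat K$ origin symmetric. If $\widehat K$ is not full dimensional then, since $\pi_0(\widehat K)=K_0$ is a body, $\widehat K$ must be the graph over $K_0$ of a linear functional $y\mapsto\langle c,y\rangle$, so $K_t=(I+t\,\theta c^{T})K_0$ is a line of linear images of $K_0$ and $\vol{K_t^{\circ}}^{-1}=\abs{1+t\langle c,\theta\rangle}\,\vol{K_0^{\circ}}^{-1}$, the modulus of an affine function, which is convex; so from here on assume $\widehat K$ is a full dimensional origin symmetric convex body.

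Next I would use the duality between projections and central sections. For the surjection $\pi_t$ one has $\pi_t^{*}w=(w,t\langle\theta,w\rangle)$, so $K_t^{\circ}=\{w\in\R^n:\pi_t^{*}w\in\widehat K^{\circ}\}$ is the preimage of the central section $\widehat K^{\circ}\cap\nu_t^{\perp}$, where $\nu_t=(-t\theta,1)/\sqrt{1+t^2}$, under the linear bijection $\pi_t^{*}\colon\R^n\to\nu_t^{\perp}$, whose volume factor is $\sqrt{1+t^2}$; hence
    \[
        \vol{K_t^{\circ}}=\frac{1}{\sqrt{1+t^2}}\,\mathrm{vol}_{n}\!\bigl(\widehat K^{\circ}\cap\nu_t^{\perp}\bigr),
    \]
with $\mathrm{vol}_n$ the $n$-dimensional Lebesgue measure. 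All the normals $\nu_t$ lie in the fixed plane $\Pi=\mathrm{span}\{(\theta,0),e_{n+1}\}$ and sweep its open upper unit semicircle. By Busemann's theorem the intersection body $I(\widehat K^{\circ})$ is an origin symmetric convex body, so $B:=I(\widehat K^{\circ})\cap\Pi$ is an origin symmetric planar convex body with $\mathrm{vol}_{n}(\widehat K^{\circ}\cap\nu_t^{\perp})=\rho_B(\nu_t)$. In orthonormal coordinates $(\xi_1,\xi_2)$ on $\Pi$ adapted to $(\theta,0)$ and $e_{n+1}$ one has $\nu_t=(-t,1)/\sqrt{1+t^2}$, so the boundary point $q(t)=(X(t),Y(t))$ of $B$ in direction $\nu_t$ satisfies $Y(t)=\rho_B(\nu_t)/\sqrt{1+t^2}=\vol{K_t^{\circ}}$ and $X(t)=-t\,Y(t)$, and it lies on the graph $\xi_2=f(\xi_1)$ of the positive concave function $f$ describing the upper boundary of $B$. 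Hence $\vol{K_t^{\circ}}^{-1}=1/f(X(t))$ while $t=-X(t)/f(X(t))$.

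The proof is then finished by the following planar lemma, which I regard as the crux. If $f$ is a positive concave function on an interval containing $0$, then $x\mapsto1/f(x)$ is a convex function of $s:=x/f(x)$. This is projective in nature: the map $T(x,y)=(x/y,1/y)$ carries lines to lines, and it carries the convex region $\{(x,y):0<y\le f(x)\}$ --- which is star-shaped about the origin precisely because $f$ is positive and concave --- onto a convex region having $(0,1)$ as a recession direction; that region is therefore the epigraph of a convex function $g$, whose graph is the image curve $x\mapsto\bigl(x/f(x),1/f(x)\bigr)$, i.e. $1/f(x)=g(x/f(x))$. Applying this with the upper boundary $f$ of $B$ yields $\vol{K_t^{\circ}}^{-1}=g(-t)$ with $g$ convex, and since $t\mapsto-t$ is affine this shows $\vol{K_t^{\circ}}^{-1}$ is a convex function of $t$.

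The genuine obstacle is conceptual: finding this reformulation. Once the shadow system is lifted to the single body $\widehat K$ and the projection--section duality is applied, the statement becomes one about how the volume of central sections of the fixed symmetric body $\widehat K^{\circ}$ varies along a pencil of hyperplanes, which is exactly the setting of Busemann's convexity theorem; the remaining input is the elementary projective lemma. The points that require care are the reduction to an origin symmetric $\widehat K$, the separate (trivial) treatment of the degenerate, lower dimensional $\widehat K$, the bookkeeping of the $\sqrt{1+t^2}$ Jacobian, and checking that $T$ sends the graph of a concave function to the graph of a convex --- not concave --- function, which is exactly where one tracks the recession direction of the star-shaped region.
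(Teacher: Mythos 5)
The paper itself offers no proof of Theorem~\ref{camgroth}; it is quoted from Campi and Gronchi \cite{campigronchionvolume}. Your overall strategy is in fact the standard one from that source: lift the shadow system to a fixed body $\widehat K\subset\R^{n+1}$ with $K_t=\pi_t(\widehat K)$, dualize so that $K_t^{\circ}$ becomes the preimage under $\pi_t^{*}$ of the central section $\widehat K^{\circ}\cap\nu_t^{\perp}$ with Jacobian $\sqrt{1+t^2}$, and invoke Busemann's theorem on the convexity of the intersection body. The symmetrization $\widehat K\mapsto\tfrac12(\widehat K-\widehat K)$, the degenerate lower-dimensional case, and the Jacobian bookkeeping are all handled correctly.

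There is, however, one step that fails as written: the claim that the radial boundary point $q(t)=\rho_B(\nu_t)\nu_t$ lies on the graph of the concave function $f$ describing the upper boundary of $B$. For an origin-symmetric planar convex body this is false in general: take $B=\conv{\pm(2,1),\,\pm(0,1)}$. For directions $\nu$ in the upper semicircle close to the positive $\xi_1$-axis, the ray from the origin exits $B$ through the edge joining $(0,-1)$ to $(2,1)$, at a point with small positive $\xi_2$, while $f\equiv 1$ on that range of $\xi_1$. In other words, $\partial B\cap\{\xi_2>0\}$ is the full radial graph over the open upper semicircle and need not coincide with the graph of $f$ restricted to $\{f>0\}$, so the identity $\vol{K_t^{\circ}}=f(X(t))$ can fail for part of the range of $t$ and your planar lemma does not directly apply. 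The gap is local and closes with the tools you already have: either apply the projective map $T(x,y)=(x/y,1/y)$ to the convex set $B\cap\{\xi_2>0\}$ itself (its image is the epigraph of a convex function $g$ whose graph is $T(\partial B\cap\{\xi_2>0\})=\{(-t,1/Y(t)):t\in\R\}$, giving $\vol{K_t^{\circ}}^{-1}=g(-t)$), or, more simply, note that
\[
\vol{K_t^{\circ}}^{-1}=\frac{\sqrt{1+t^2}}{\rho_B(\nu_t)}=\norm{(-t,1)}_B
\]
is the Minkowski gauge of the convex body $B$ evaluated along the affine line $t\mapsto(-t,1)$, hence convex in $t$; this renders the upper-boundary function and the separate planar lemma unnecessary.
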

\noindent We are interested in a particular case of shadow systems studied by Campi, Colesanti, and Gronchi \cite{camcolgroanote}, where the bounded function $\alpha$ is constant on each chord of $K$ parallel to $\theta$.
\begin{Def}
    Let $K\subset\R^n$. A shadow system is called an RS-movement of $K$ if
        \[
        K_t = \mathop{\rm conv}\{ x+ t\beta(Px)\theta : x\in K\},
        \]
    where $t\in[a,b]$, $0\in[a,b]$, $P$ is the orthogonal projection defined as in \S~\ref{Prelim}, and $\beta$ is a real valued function on $PK$.
\end{Def}

Now, we introduce a functional motivated by Sylvester's. It expresses the normalized higher negative moments of the volume of the polar of a random polytope in $K$. Denote by $\mathcal{K}^n$ the class of all convex bodies in $\R^n$ and $\mathcal{K}_{0}^n$ the class of all convex bodies with the origin as an interior point. Let $K \in\mathcal{K}_{0}^n$ and $C \in\mathcal{K}_{0}^N$ be centrally symmetric, $r\geq1$, $N\geq n$, and $X_1,\dots,X_N$ independent random vectors sampled uniformly in $K$. We define the functional
\begin{align}\label{mainfunctional}
    W_r(K;N;C) 
    &=\nonumber
    \frac{1}{|K|^{N+r}}\displaystyle\int_{K^N} |([x_1\cdots x_N]C)^{\circ}|^{-r} \, dx_1\cdots dx_N\\[0.5em]
    &=
    \dfrac{1}{\vol{K}^{r}}\E\vol{\left([X_1\cdots X_N]C\right)^\circ}^{-r}.
\end{align}
Notice that this functional is finite. Indeed, for $[K]_N=[x_1\cdots x_N]B_1^N$, $[K]_N\subset K$ implies $\vol{([K]_N)^{\circ}}\geq\vol{K^{\circ}}$. Thus, $\vol{([K]_N)^{\circ}}^{-r}\leq\vol{K^{\circ}}^{-r}$ for all $r>0$. For general $C$, we have $C\subset RB_1^N$ for some $R>0$. Hence, $W_r(K;N,C)$ is finite. In addition, it is easy to check that $W_r(K;N;C)$ is continuous with respect to the Hausdorff metric and it is invariant under invertible linear transformations $T$, i.e., $W_r(TK;N,C)=W_r(K;N,C)$. This follows from the change of variabes $x_i=Ty_i$ in \eqref{mainfunctional} and using $([Ty_1\ldots Ty_N]C)^{\circ}=T^{-t}([y_1\ldots y_N]C)^{\circ}$ However, in general, the functional is not invariant under invertible affine transformations as we are considering the polar body with respect to the origin.
	\begin{Th}\label{maintheorem}
    Let $C\in\mathcal{K}_{0}^N$ be centrally symmetric, $K_t$ an RS-Movement for $t\in[-1,1]$, $r\geq 1$, and $N\geq n$. Then
    	\[
    	t\longmapsto W_r(K_t;N;C)
    	\]
    is a convex function of $t$.
	\end{Th}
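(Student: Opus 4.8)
The plan is to reduce the statement to Campi–Gronchi's Theorem~\ref{camgroth} by exhibiting, for each fixed choice of ``time parameters'' of the sample points, a genuine shadow system of symmetric convex bodies in $\R^n$ whose polar volume (to the $-r$ power) we then integrate. First I would fix the RS-movement data: $K_t = \mathop{\rm conv}\{x + t\beta(Px)\theta : x\in K\}$, with $P = P_{\theta^\perp}$ and $\beta$ real-valued on $PK$. The key observation is that sampling $X_1,\dots,X_N$ uniformly in $K_t$ can be coupled across $t$: write each point in the fiber coordinates $x = y + s\theta$ with $y\in PK$ and $s$ ranging over the chord $[\ell_{K,\theta}(y),u_{K,\theta}(y)]$; because $\beta$ is constant on chords parallel to $\theta$, the map $(y,s)\mapsto (y,s+t\beta(y))$ is, for each $t$, a volume-preserving affine shear taking $K$ onto $K_t$. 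Hence if $(Y_i,S_i)$ are i.i.d.\ uniform on $K = K_0$, then $X_i(t) := Y_i + (S_i + t\beta(Y_i))\theta$ are i.i.d.\ uniform on $K_t$, and $\vol{K_t}=\vol{K}$ is independent of $t$. So
\[
W_r(K_t;N;C) = \frac{1}{\vol{K}^{r}}\,\E\,\bigl\lvert \bigl([X_1(t)\cdots X_N(t)]C\bigr)^{\circ}\bigr\rvert^{-r},
\]
and it suffices to prove that, for \emph{every fixed} realization $(y_1,s_1),\dots,(y_N,s_N)$, the function
\[
t \longmapsto \bigl\lvert \bigl([x_1(t)\cdots x_N(t)]C\bigr)^{\circ}\bigr\rvert^{-r}
\]
is convex on $[-1,1]$; integrating a family of convex functions preserves convexity.

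The heart of the matter is then to recognize $P_t := [x_1(t)\cdots x_N(t)]C$ as a shadow system in $t$. Each generator point $x_i(t) = y_i + s_i\theta + t\beta(y_i)\theta$ moves linearly in $t$ along the fixed direction $\theta$ with its own speed $\alpha_i := \beta(y_i)$. A point of $P_t$ has the form $\sum_i c_i x_i(t) = \sum_i c_i x_i(0) + t\bigl(\sum_i c_i\alpha_i\bigr)\theta$ for $c = (c_i)\in C$; this is exactly the defining form of a shadow system along $\theta$, with the ``base set'' being $\{(c, \sum_i c_i x_i(0)) : c\in C\}$ suitably parametrized and the displacement function $c\mapsto \sum_i c_i\alpha_i$ bounded on $C$ (which is compact). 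Moreover $P_t$ is origin-symmetric: since $C = -C$, for any $c\in C$ also $-c\in C$, and $\sum_i(-c_i)x_i(t) = -\sum_i c_i x_i(t)$, so $P_t = -P_t$; and $P_t$ has nonempty interior for generic realizations (the degenerate set of $(y_i,s_i)$ has measure zero and contributes nothing to the integral, while on it one may pass to a limit). Therefore Theorem~\ref{camgroth} applies to $\{P_t\}$ and gives that $\vol{P_t^{\circ}}^{-1}$ is convex in $t$.

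Finally I would upgrade from the $-1$ power to the $-r$ power for $r\ge 1$. Since $\vol{P_t^{\circ}}^{-1}$ is a nonnegative convex function of $t$, and $u\mapsto u^{r}$ is convex and nondecreasing on $[0,\infty)$ for $r\ge 1$, the composition $t\mapsto \bigl(\vol{P_t^{\circ}}^{-1}\bigr)^{r} = \vol{P_t^{\circ}}^{-r}$ is again convex. This holds pointwise for almost every realization, so
\[
W_r(K_t;N;C) = \frac{1}{\vol{K}^{r}}\int_{K^N}\vol{P_t^{\circ}}^{-r}\,d\mu(y,s)
\]
is a convex function of $t$ as an average of convex functions, completing the proof. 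The step I expect to be the main obstacle — or at least the one requiring the most care — is the coupling/volume-invariance argument: making precise that the shear is measure-preserving and that it transports the uniform law on $K$ to the uniform law on $K_t$, and handling the measure-zero set of degenerate configurations where $[x_1(t)\cdots x_N(t)]C$ fails to be a body, so that Theorem~\ref{camgroth} can be invoked cleanly (e.g.\ by an approximation or a lower-semicontinuity argument for $\vol{\cdot^{\circ}}^{-r}$). Everything else is a direct translation into the Rogers–Shephard / Campi–Gronchi framework.
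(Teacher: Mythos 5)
Your proposal follows the paper's proof almost step for step. The coupling $X_i(t)=Y_i+(S_i+t\beta(Y_i))\theta$ is exactly the paper's Fubini-plus-change-of-variables $y_i=\widetilde y_i+t\beta(x_i)$ (the shear along $\theta$ has Jacobian $1$, so it transports the uniform law on $K$ to the uniform law on $K_t$ and $\vol{K_t}=\vol{K}$), and your closing steps --- composing the nonnegative convex function $t\mapsto\vol{P_t^\circ}^{-1}$ with the nondecreasing convex map $u\mapsto u^r$, then integrating over the now $t$-independent domain --- are the paper's closing remarks. The nondegeneracy point is also fine: since $C$ has nonempty interior, $[x_1\cdots x_N]C$ is a body exactly when $x_1,\dots,x_N$ span $\R^n$, which happens almost surely.

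The one place you deviate is the key convexity input, and that is where there is a gap as written. You claim that $P_t=[x_1(t)\cdots x_N(t)]C$ is ``exactly the defining form of a shadow system,'' so that Theorem~\ref{camgroth} applies. But in the Rogers--Shephard definition the displacement must be a bounded real-valued function $\alpha$ on a bounded set $A\subset\R^n$, whereas your displacement $c\mapsto\sum_i c_i\alpha_i$ lives on $C\subset\R^N$; since $N\geq n$ the linear map $c\mapsto\sum_i c_ix_i(0)$ is in general non-injective, so this displacement need not descend to a well-defined function on a subset of $\R^n$ --- your parenthetical ``suitably parametrized'' is concealing precisely this issue. The fix is the one the paper uses: invoke \eqref{CFPP-Conv}, i.e.\ Corollary~3.8 of \cite{CFPP}, which states that $(t_1,\dots,t_N)\mapsto\vol{([x_1+t_1\theta\cdots x_N+t_N\theta]C)^\circ}^{-1}$ is convex on $\R^N$; restricting it to the affine line $t\mapsto(\widetilde y_1+t\beta(x_1),\dots,\widetilde y_N+t\beta(x_N))$ gives exactly the pointwise convexity you need. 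That corollary is the ``analogue of Theorem~\ref{camgroth} for linear images,'' established in \cite{CFPP} by making rigorous the very identification you sketch; with that citation in place of the hand-made shadow system, your argument coincides with the paper's.
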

We will use an analogue of Theorem~\ref{camgroth} for linear images of convex sets, see \cite[Corollary 3.8]{CFPP}. Namely, given an origin-symmetric convex body  $C$ in $\R^N$, $\theta\in\Sp^{n-1}$, and $x_1,\ldots,x_N\in\theta^{\perp}$, the map
	\begin{equation}\label{CFPP-Conv}
	(t_1,\ldots,t_N) \mapsto \vol{([x_1+t_1\theta \cdots x_N+t_N\theta]C)^{\circ}}^{-1}
	\end{equation}
is convex on $\R^N$.
\begin{proof}
    Suppose without loss of generality $K_0=K$, so $PK=PK_t$ and $\vol{K}=\vol{K_t}$ for all $t\in[-1,1]$. Let $u$ and $\ell$ be as in \S~\ref{Prelim}, then one has
	\begin{align*}
		K &= \{ (x,y)\in PK\times\R: \ell(x)\leq y \leq u(x)\}\\[.5em]
		K_t &= \{(x,y)\in PK\times\R : (\ell+t\beta)(x) \leq y \leq (u+t\beta)(x)\}.
	\end{align*}
    Therefore by Fubini's Theorem
\begin{align*}
        W_r(K_t;N;C)
        &=
        \dfrac{1}{\vol{K}^{N+r}}
        \displaystyle\int_{(PK)^N}
        \left(
        \prod\limits_{i=1}^N\int_{(\ell+t\beta)(x_i)}^{(u+t\beta)(x_i)}
        \vol{
        \left(
        M_1C
        \right)^\circ
        }^{-r}
        d\vec{y}
        \right)
        d\vec{x}\\[1em]
        &=
            \dfrac{1}{\vol{K}^{N+r}}
            \displaystyle\int_{(PK)^N}
            \left(
            \prod\limits_{i=1}^N\int_{\ell(x_i)}^{u(x_i)}
            \vol{
            (
            M_2C
            )^\circ
            }^{-r}
            d\vec{\widetilde{y}}
            \right)
            d\vec{x},
\end{align*}
    where
        \begin{subequations}
            \begin{align*}
                M_1C
                &:=
                [x_1+y_1\theta\cdots x_N+y_N\theta]C\\
                &=
                \left\{ \displaystyle\underset{i=1}{\overset{N}{\sum}} c_i(x_i+y_i\theta) : c=(c_i)\in C \right\}\\[0.5em]
    \intertext{and after considering $y_i=\widetilde{y}_i+t\beta(x_i)$}
                M_2C
                &:=
                [x_1+(\widetilde{y}_1+t\beta(x_1))\theta\cdots x_N+(\widetilde{y}_N+t\beta(x_N))\theta]C\\[0.5em]
                &=
                \left\{ \displaystyle\underset{i=1}{\overset{N}{\sum}} c_i(x_i+(\widetilde{y}_i+t\beta(x_i))\theta) : c=(c_i)\in C \right\}.
            \end{align*}
        \end{subequations}
    Notice that by the convexity of \eqref{CFPP-Conv}, the functional in \eqref{mainfunctional} is the repeated integral of the $r$-th power of a convex functional. Hence, it is convex.

\end{proof}

At this point we generalize the previous functional for non-symmetric convex bodies. Let $N\geq n+1$, $K \in\mathcal{K}^n$, $C\in\mathcal{K}^N$, $X_1,\dots,X_N$ independent random vectors sampled uniformly in $K$, and $r\geq1$. The functional
\begin{align}\label{santalofunctional}
     W_{r}^{s\circ}(K;N;C)
    &=\nonumber
    \frac{1}{|K|^{N+r}}\displaystyle\int_{K^N} |([x_1\cdots x_N]C)^{s\circ}|^{-r} \, dx_1\cdots dx_N\\[0.5em]
    &=
    \dfrac{1}{\vol{K}^{r}}\E\vol{\left([X_1\cdots X_N]C\right)^{s\circ}}^{-r},
\end{align}
expresses the normalized higher negative moments of the volume, of the polar of a non-symmetric random polytope in $K$ with respect to its Santal\'o point.

The following theorem by Meyer and Reisner \cite{meyreishadow}, allows us to show for \eqref{santalofunctional} an analogous result to Theorem~\ref{maintheorem}. We recall a non-degenerate shadow system, $K_t$, is a shadow system with non-empty interior for all $t$ in the interval.

\begin{Th}\label{meyreith}
    Let $K_t$, $t\in[a,b]$, be a non-degenerated shadow system in $\R^n$. Then $\vol{K_t^{s\circ}}^{-1}$ is a convex function of $t$.
\end{Th}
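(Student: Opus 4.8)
The plan is to reduce the statement to the centrally symmetric theory --- Theorem~\ref{camgroth} together with the pointwise convexity~\eqref{CFPP-Conv} --- by lifting the shadow system to $\R^{n+1}$ and taking a supremum over translations. Since $\vol{K^{s\circ}}=\min_{z}\vol{(K-z)^\circ}$, with the minimum attained at the Santal\'o point, one has, using the convention $\vol{(K-z)^\circ}^{-1}=0$ when $z\notin\mathop{\rm int}(K)$,
	\[
	\vol{K_t^{s\circ}}^{-1}=\sup_{z\in\R^n}\vol{(K_t-z)^\circ}^{-1}.
	\]
It is worth stressing at the outset that, in contrast with the symmetric case, the individual functions $t\mapsto\vol{(K_t-z)^\circ}^{-1}$ are in general \emph{not} convex: already for intervals on the line this quantity is a quotient of a product of two affine functions of $t$ by a positive affine one, which is typically neither convex nor concave. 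So the theorem genuinely asserts that the \emph{supremum} above is convex, and the Santal\'o point is precisely what restores, at the optimal translate, the symmetry needed to run an RS-type argument.

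Next I would pass to $\R^{n+1}$. Writing $K_t=\mathop{\rm conv}\{x+t\alpha(x)\theta:x\in A\}$, set $\widehat K=\mathop{\rm conv}\{(x,\alpha(x)):x\in A\}\subset\R^{n+1}$ and $L_t(\xi,s)=\xi+ts\theta$, a linear map that is affine in $t$, so that $K_t=L_t\widehat K$. Dualizing the surjection $L_t$ gives $(K_t-z)^\circ=(L_t^{*})^{-1}\bigl((\widehat K-\zeta_t)^\circ\bigr)$ for any lift $\zeta_t$ of $z$; since $L_t^{*}y=(y,t\ang{\theta,y})$ identifies $\R^n$ with the central hyperplane $V_t=\{(\eta,\tau):\tau=t\ang{\theta,\eta}\}$ with constant volume distortion $\sqrt{1+t^{2}}$, and since a central section of a polar body is the polar --- taken inside that subspace --- of the orthogonal projection, one arrives at
	\[
	\vol{K_t^{s\circ}}^{-1}=\sqrt{1+t^{2}}\;\vol{(P_{V_t}\widehat K)^{s\circ}}^{-1},
	\]
where $P_{V_t}$ is the orthogonal projection onto $V_t$, the polar on the right is taken within $V_t$, and $P_{V_t}\widehat K$ is the shadow of the \emph{fixed} body $\widehat K$ on the rotating hyperplane $V_t$. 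This displays the exact parallel with Theorem~\ref{camgroth}: for an origin-symmetric $\widehat K$ the same identity reads $\sqrt{1+t^{2}}\,\vol{\widehat K^\circ\cap V_t}^{-1}=\vol{(L_t\widehat K)^\circ}^{-1}$, which is convex by Campi--Gronchi because $L_t\widehat K$ is a symmetric shadow system; so the awkward factor $\sqrt{1+t^{2}}$ is not itself the obstruction.

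The remaining point --- and the main obstacle --- is the optimization over translations: one must show that, carrying along the factor $\sqrt{1+t^{2}}$, the choice $z=s(K_t)$ (equivalently, the translate of $P_{V_t}\widehat K$ whose polar is centered at the origin) yields a convex function of $t$. I would attempt this by applying the joint convexity~\eqref{CFPP-Conv} to the lifted shadow system after the symmetrizing construction $\widehat K\mapsto\mathop{\rm conv}\bigl((\widehat K\times\{1\})\cup(-\widehat K\times\{-1\})\bigr)$, and then running a continuity and compactness argument at the Santal\'o point: the vanishing-centroid condition there supplies the ``balanced chord'' symmetry that, as in Meyer--Pajor's proof of the Blaschke--Santal\'o inequality \cite{meypajblaschke}, promotes the pointwise inequalities from~\eqref{CFPP-Conv} to convexity of the supremum; alternatively one can compute the second variation directly on the Fubini decomposition along the $\theta$-chords used in the proof of Theorem~\ref{maintheorem}. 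This is the technical heart of the result, and it is exactly what Meyer and Reisner carry out in \cite{meyreishadow}. Granting it, the supremum formula of the first paragraph shows that $t\mapsto\vol{K_t^{s\circ}}^{-1}$ is convex. Non-degeneracy of the shadow system is used only to guarantee that the Santal\'o point is defined for every $t$ in the interval and that $\widehat K$ may be taken full-dimensional, passing to its affine hull otherwise.
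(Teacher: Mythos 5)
This statement is not proved in the paper at all: it is Meyer and Reisner's theorem, imported verbatim from \cite{meyreishadow} as a black box, so there is no internal argument to compare yours against. Judged as a standalone proof, your proposal has a genuine gap, and it sits exactly where all the difficulty of the theorem lives. Your preliminary reductions are correct and even illuminating: the identity $\vol{K_t^{s\circ}}^{-1}=\sup_z\vol{(K_t-z)^\circ}^{-1}$ is right, the observation that the individual functions $t\mapsto\vol{(K_t-z)^\circ}^{-1}$ are \emph{not} convex for fixed $z$ (so the sup-of-convex-functions shortcut is unavailable) is correct and worth making, and the lift $K_t=L_t\widehat K$ with $(L_t\widehat K)^\circ=(L_t^*)^{-1}(\widehat K^\circ)$ and the $\sqrt{1+t^2}$ distortion is a legitimate reformulation. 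But none of this proves anything yet; it only restates the problem. The actual assertion --- that after minimizing over translations the resulting function of $t$ is convex --- is handled in your final paragraph by a list of strategies you ``would attempt'' (symmetrize to $\R^{n+1}$, invoke \eqref{CFPP-Conv}, argue as in Meyer--Pajor, or compute a second variation), followed by the sentence that this ``is exactly what Meyer and Reisner carry out'' and the word ``Granting it.'' That is a citation of the result you are supposed to be proving, not a proof of it.

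Concretely, the missing step is a three-point inequality of the form $\vol{K_{(t_1+t_2)/2}^{s\circ}}^{-1}\le\tfrac12\bigl(\vol{K_{t_1}^{s\circ}}^{-1}+\vol{K_{t_2}^{s\circ}}^{-1}\bigr)$ in which the translation point must be chosen \emph{differently} at $t_1$, $t_2$, and the midpoint; the whole content of Meyer--Reisner's argument is the construction of a comparison translate at the midpoint (using the characterization of the Santal\'o point via the vanishing centroid of the polar, and a Fubini decomposition along the chords parallel to $\theta$) that makes such an inequality go through. Your sketch names the ingredients but never assembles them, and the symmetrization $\widehat K\mapsto\mathop{\rm conv}\bigl((\widehat K\times\{1\})\cup(-\widehat K\times\{-1\})\bigr)$ you propose does not obviously interact correctly with the Santal\'o-point normalization (the Santal\'o point of $K_t$ is not the image of the center of the symmetrized body). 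If the intent is simply to use the theorem as the paper does --- as an external result of Meyer--Reisner --- then say so and cite it; if the intent is to prove it, the proof still has to be written.
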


As in the symmetric case, \eqref{santalofunctional} is finite, continuous with respect to the Hausdorff metric, and invariant under invertible linear transformations and translations; the Santal\'{o} point optimizes the translation of all points in the interior of $K$. In order to use Theorem~\ref{meyreith}, we consider  $C=\mathop{\rm conv}\{e_1,\ldots,e_N\}$.

\begin{Th}\label{santalotheo}
    Let $C=\mathop{\rm conv}\{e_1,\ldots,e_N\}$, $K_t$ an RS-Movement for $t\in[-1,1]$, $r\geq 1$, and $N\geq n+1$. Then
    	\[
    	t\longmapsto W^{s\circ}_{r}(K_t;N;C)
    	\]
    is a convex function of $t$.
\end{Th}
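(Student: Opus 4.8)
The plan is to mimic the proof of Theorem~\ref{maintheorem}, replacing the convexity input \eqref{CFPP-Conv} (which rests on Theorem~\ref{camgroth}) by the Santal\'o-point analogue coming from Theorem~\ref{meyreith}. The first step is to record the correct pointwise convexity statement: for $C=\mathop{\rm conv}\{e_1,\ldots,e_N\}$, a fixed direction $\theta\in\Sp^{n-1}$, and fixed $x_1,\ldots,x_N\in\theta^{\perp}$, the map
\[
(t_1,\ldots,t_N)\longmapsto \vol{\bigl([x_1+t_1\theta\ \cdots\ x_N+t_N\theta]C\bigr)^{s\circ}}^{-1}
\]
is convex on $\R^N$. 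To see this, note that for each affine line $\{(t_1,\ldots,t_N)=a+s v : s\in\R\}$ in parameter space the family $s\mapsto [x_1+(a_1+sv_1)\theta\ \cdots\ x_N+(a_N+sv_N)\theta]C$ is precisely a shadow system of simplices along $\theta$ (the $i$-th vertex $x_i+(a_i+sv_i)\theta$ moves linearly in $s$ at speed $v_i$), so by Theorem~\ref{meyreith} its Santal\'o polar volume has convex reciprocal along that line, provided the system is non-degenerate. Convexity along every line is convexity on $\R^N$.

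The second step is the Fubini computation, identical in form to the proof of Theorem~\ref{maintheorem}. Writing $K_0=K$ with upper/lower functions $u,\ell$ on $PK$ and $K_t$ with $u+t\beta$, $\ell+t\beta$, I would express
\begin{align*}
W^{s\circ}_r(K_t;N;C)
&=\dfrac{1}{\vol{K}^{N+r}}\int_{(PK)^N}\left(\prod_{i=1}^N\int_{\ell(x_i)}^{u(x_i)}\vol{(M_2C)^{s\circ}}^{-r}\,d\vec{\widetilde y}\right)d\vec x,
\end{align*}
where $M_2C=[x_1+(\widetilde y_1+t\beta(x_1))\theta\ \cdots\ x_N+(\widetilde y_N+t\beta(x_N))\theta]C$ after the substitution $y_i=\widetilde y_i+t\beta(x_i)$, which is volume-preserving on each chord and moves the domain of integration back to the fixed box $\prod_i[\ell(x_i),u(x_i)]$. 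For each fixed $\vec x$ and $\vec{\widetilde y}$, the integrand $t\mapsto\vol{(M_2C)^{s\circ}}^{-r}$ is the $r$-th power ($r\ge 1$) of a convex, nonnegative function of $t$ — convex because $t\mapsto(\widetilde y_i+t\beta(x_i))$ is affine and we feed this into the convex map of the first step — hence convex in $t$. Convexity is preserved under the inner iterated integrals in $\vec{\widetilde y}$ and the outer integral in $\vec x$ (both over $t$-independent domains), so $t\mapsto W^{s\circ}_r(K_t;N;C)$ is convex. Finiteness and the other regularity properties were already noted in the text preceding the statement.

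The main obstacle is the non-degeneracy hypothesis required by Theorem~\ref{meyreith}: the shadow system of simplices appearing along a generic line in $(t_1,\ldots,t_N)$-space can degenerate (the $N$ points $x_i+t_i\theta$ may become affinely dependent, or the simplex may flatten), and then the Santal\'o polar is not defined. I would handle this the same way it is handled for the symmetric functional: the set of parameters where $[x_1+t_1\theta\ \cdots\ x_N+t_N\theta]C$ has empty interior is a measure-zero algebraic subset of $\R^N$, on the complement the function is finite and convex (by Theorem~\ref{meyreith} applied on each line within a non-degenerate open segment), and convexity extends to the closure by continuity; equivalently one restricts to $N\ge n+1$ and argues that degeneracy occurs only on a negligible set of $(\vec x,\vec{\widetilde y})$ so the iterated integral is unaffected. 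A second, minor point to verify is that the change of variables $y_i=\widetilde y_i+t\beta(x_i)$ legitimately interchanges with the polar-volume functional (it does, since it is just a shear on each vertical chord and the vertices of $M_1C$ and $M_2C$ agree), and that the Santal\'o point varies measurably in the parameters so the integrand is measurable — standard, since the Santal\'o point is the unique minimizer of a jointly continuous strictly convex function.
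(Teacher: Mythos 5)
Your proposal is correct and follows essentially the same route as the paper, whose own proof of Theorem~\ref{santalotheo} simply says that, as in Theorem~\ref{maintheorem}, the result follows from Fubini's Theorem and Theorem~\ref{meyreith}. You supply the details the paper leaves implicit --- in particular the observation that for $C=\mathop{\rm conv}\{e_1,\ldots,e_N\}$ the family $[x_1+t_1\theta\cdots x_N+t_N\theta]C$ restricted to any line in $(t_1,\ldots,t_N)$-space is a shadow system of simplices, so Theorem~\ref{meyreith} yields the needed multi-parameter convexity, together with the non-degeneracy and measurability caveats --- all of which is consistent with the paper's intended argument.
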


\begin{proof}
    Suppose without loss of generality $K_0=K$, so $PK=PK_t$ and $\vol{K}=\vol{K_t}$ for all $t\in[-1,1]$. Let $u$ and $\ell$ be as in \S~\ref{Prelim}. Then, as in Theorem~\ref{maintheorem}, the convexity of \eqref{santalofunctional} follows by Fubini's Theorem and Theorem \ref{meyreith}.
\end{proof}

\section{Applications}

Here we present some applications of Theorem \ref{maintheorem} and Theorem \ref{santalotheo}. In particular, we argue the maximizers for our functionals \eqref{mainfunctional} and \eqref{santalofunctional} are squares and triangles, respectively, and prove by a random approximation procedure important results from Convex Geometry in the plane where a local stochastic dominance holds. We start by showing the maximizers for \eqref{mainfunctional}.

    \begin{Lemma}\label{quadrilateralsmaximize}
    Let $N\geq2$, $C\in\mathcal{K}_{0}^N$ origin symmetric, and $r\geq 1$. Then
        \begin{equation*}
            W_r(K;N;C) \leq W_r(B_{\infty}^2;N;C)
        \end{equation*}
    for all centrally symmetric $K\in\mathcal{K}_{0}^2$.
    \end{Lemma}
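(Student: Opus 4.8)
The plan is to exploit the continuity and affine-invariance of $W_r$ together with a compactness argument, reducing the problem to showing that $W_r$ cannot be maximized at any convex body which is not a parallelogram, and then identifying the parallelogram among linear images as $B_\infty^2$. More precisely, since $W_r(\cdot;N;C)$ is continuous in the Hausdorff metric and invariant under $GL(2)$, after normalizing volume the family of centrally symmetric bodies in $\mathcal K_0^2$ modulo linear transformations is compact, so the supremum is attained at some body $K$. I would assume, for contradiction, that $K$ is not (a linear image of) a square and derive an RS-movement of $K$ that strictly increases the functional, contradicting maximality.

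The key tool is Theorem~\ref{maintheorem}: for any RS-movement $K_t$, $t\in[-1,1]$, of $K$ along a direction $\theta$, the map $t\mapsto W_r(K_t;N;C)$ is convex. Convexity on $[-1,1]$ forces $W_r(K_0;N;C)\le\max\{W_r(K_{-1};N;C),W_r(K_1;N;C)\}$, with equality throughout only if the function is affine on $[-1,1]$. So if $K=K_0$ is a maximizer, every RS-movement through $K$ must produce a function that is affine (in fact constant, after we also use that $K_{\pm1}$ have the same volume and hence cannot exceed the max). The standard Campi–Colesanti–Gronchi mechanism is then to choose the shift function $\beta$ on $PK$ cleverly — typically $\beta(Px)$ chosen so that $K_1$ symmetrizes or "straightens" a portion of the boundary of $K$ — so that unless $K$ already has the extremal (polygonal) structure in the direction $\theta^\perp$, the movement is genuinely two-sided and non-affine, giving the contradiction. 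Iterating over finitely many well-chosen directions, or invoking the known classification that the only centrally symmetric planar bodies rigid under all such RS-movements are parallelograms, pins down $K$ up to $GL(2)$ as a parallelogram, and by affine-invariance $W_r(K;N;C)=W_r(B_\infty^2;N;C)$.

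Concretely I would argue as follows. First, record that it suffices to prove $W_r(K;N;C)\le W_r(P;N;C)$ for $P$ any parallelogram of the same volume, since all parallelograms are linearly equivalent to $B_\infty^2$ and $W_r$ is $GL(2)$-invariant. Second, fix a maximizer $K$ (existence by compactness/continuity). Third, for a direction $\theta$, consider the RS-movement that moves each chord of $K$ parallel to $\theta$ so that at $t=1$ the chord's midpoint lies on a fixed line — this is the classical "shaking towards a chord" / Blaschke–type RS-movement used by Campi–Colesanti–Gronchi; by Theorem~\ref{maintheorem} the functional is convex in $t$, by volume-preservation and maximality it is constant, and by analyzing the equality case in \eqref{CFPP-Conv} (strict convexity unless all the relevant points are affinely positioned as $t$ varies) conclude that $K$ must be a "sheared" shape, i.e. its upper and lower functions $u,\ell$ differ by an affine function on $PK$; applying this in two independent directions forces $K$ to be a parallelogram. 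Fourth, conclude.

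The main obstacle is the equality-case analysis: Theorem~\ref{maintheorem} gives convexity but not strict convexity, so I must extract from the convexity statement \eqref{CFPP-Conv} precisely when $t\mapsto W_r(K_t;N;C)$ fails to be strictly convex, and translate that degeneracy into rigid geometric information about $\partial K$. This requires understanding when $(t_1,\dots,t_N)\mapsto \vol{([x_1+t_1\theta\cdots x_N+t_N\theta]C)^\circ}^{-1}$ is affine along the specific line traced out by the RS-movement — a nondegeneracy/strict-convexity statement that is morally true (the polar volume is strictly convex unless the configuration is degenerate) but whose careful proof is the delicate point, and is exactly the kind of argument carried out in \cite{camcolgroanote,campigronchionvolume} for the analogous Sylvester-type functionals. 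I expect to either invoke their strict-convexity lemmas directly or reprove the needed special case, and then the geometric conclusion "rigid under all RS-movements $\Rightarrow$ parallelogram" follows as in the classical setting.
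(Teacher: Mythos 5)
Your argument has a genuine gap at exactly the point you flag as ``the delicate point'': the whole contradiction hinges on upgrading the convexity of $t\mapsto W_r(K_t;N;C)$ to a strict-convexity/rigidity statement (``constant along every volume-preserving RS-movement $\Rightarrow$ parallelogram''), and neither Theorem~\ref{maintheorem} nor \eqref{CFPP-Conv} gives you that. The map in \eqref{CFPP-Conv} is genuinely affine (indeed constant) along many nondegenerate lines --- e.g.\ whenever the perturbation keeps the convex hull $[x_1+t_1\theta\cdots x_N+t_N\theta]C$ unchanged because the moved points stay in the interior --- and after integrating over $(PK)^N$ one must rule out cancellation for \emph{all} admissible $\beta$ simultaneously. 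This equality-case analysis for polar-volume shadow-system functionals is a substantial piece of work in its own right (it is the hard part of Campi--Gronchi's and Meyer--Reisner's papers), and ``invoke their strict-convexity lemmas'' does not apply verbatim here because your functional is an $N$-fold integral of a negative power of polar volume, not the polar volume itself. As written, your proof establishes only that a maximizer exists and that $W_r$ is constant along every RS-movement through it, which is not yet a contradiction.

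The paper avoids this issue entirely by running the argument on polygons rather than on an abstract maximizer: for a centrally symmetric $2m$-gon with vertices $\pm a_1,\dots,\pm a_m$, it moves the vertex pair $\pm a_2$ parallel to the chord $a_1a_3$ (a volume-preserving RS-movement) and pushes $t$ to the endpoints of its admissible interval, where a vertex pair is absorbed and the vertex count strictly drops. Soft convexity alone gives $W_r(K;N;C)\le\max\{W_r(K_{-\delta_1};N;C),W_r(K_{\delta_2};N;C)\}$, and the iteration terminates at a parallelogram after finitely many steps; general bodies then follow from density of polygons, continuity in the Hausdorff metric, and $GL(2)$-invariance. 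If you want to salvage your write-up, the cleanest fix is to replace the maximizer-plus-rigidity step by this combinatorial descent on polygons, which needs no equality-case analysis at all.
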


    \begin{proof}
        Let $m\geq3$, $K$ a polygon with $\{\pm a_1,\ldots,\pm a_m\}$ ordered clockwise vertices, $\theta$ a direction parallel to the line joining $a_1$ and $a_3$, and set
            \[
            K_t := {\rm conv}\{\pm a_1, a_2+t\theta, \pm a_3, \ldots,\pm a_m\}.
            \]
        Then, there exists $\delta_1,\delta_2>0$ such that for $t\in[-\delta_1,\delta_2]$, $K_t$ is an RS-movement where $K_{-\delta_1}$ and $K_{\delta_2}$ have $m-2$ vertices. As $W_r(K;N;C)$ is a convex functional, Theorem \ref{maintheorem},
            \[
            W_r(K;N;C) \leq \max\{ W_r(K_{-\delta_1};N;C), W_r(K_{\delta_2};N;C) \}
            \]
        and iterating the above procedure one has
            \[
            W_r(K;N;C) \leq W_r(P;N;C),
            \]
        where $P$ is a parallelogram. The result follows by the invariance under linear transformations and the continuity of the functional.
    \end{proof}

We recall now a convergence result that will allow us to recover the classical inequalities, for the proof we refer to \cite{Schneider-Book}.
\begin{Lemma}
\label{Hausdorff-Convergence}
    Let $K, K_1, \ldots, K_N$ be convex bodies with the origin as an interior point with $K_N \overset{\delta^H}{\longrightarrow} K$ as $N\to\infty$. Then
        \[
            K_N^{\circ} \overset{\delta^H}{\longrightarrow} K^{\circ} \text{ as } N\to\infty.
        \]
\end{Lemma}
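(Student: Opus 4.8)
The statement to prove is Lemma~\ref{Hausdorff-Convergence}: if $K_N \to K$ in Hausdorff distance (all bodies containing the origin in the interior), then $K_N^\circ \to K^\circ$.

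\bigskip

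The plan is to reduce the claim to two monotone inclusion statements and a squeeze argument, using the elementary fact that polarity reverses inclusions and scales inversely under dilations about the origin. First I would recall that for convex bodies containing $0$ in the interior, $A\subseteq B$ implies $B^\circ \subseteq A^\circ$, and $(\lambda A)^\circ = \lambda^{-1}A^\circ$ for $\lambda>0$. Since $0\in\mathop{\rm int}K$, there exist $0<\rho<R$ with $\rho B_2^n \subseteq K \subseteq R B_2^n$; Hausdorff convergence $K_N\to K$ then gives, for all large $N$, a uniform sandwich $\tfrac{\rho}{2}B_2^n \subseteq K_N \subseteq 2R B_2^n$, which in particular keeps the polars $K_N^\circ$ uniformly bounded and bounded away from $0$ (so all the objects in play live in a fixed compact family, and Hausdorff limits make sense).

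\bigskip

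Next I would translate the Hausdorff bound into a two-sided dilation bound. Given $\varepsilon>0$, for $N$ large we have $\delta^H(K_N,K)<\varepsilon$, i.e. $K_N \subseteq K+\varepsilon B_2^n$ and $K\subseteq K_N+\varepsilon B_2^n$. Using $\rho B_2^n\subseteq K$, one gets $\varepsilon B_2^n \subseteq \tfrac{\varepsilon}{\rho}K$, hence $K+\varepsilon B_2^n \subseteq (1+\tfrac{\varepsilon}{\rho})K$ and similarly $K_N + \varepsilon B_2^n\subseteq (1+\tfrac{2\varepsilon}{\rho})K_N$ for $N$ large (using the uniform lower bound $\tfrac{\rho}{2}B_2^n\subseteq K_N$). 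Therefore
\[
(1+\tfrac{2\varepsilon}{\rho})^{-1} K \subseteq K_N \subseteq (1+\tfrac{\varepsilon}{\rho}) K
\]
for all large $N$. Applying polarity reverses the inclusions and inverts the scalars:
\[
(1+\tfrac{\varepsilon}{\rho})^{-1} K^\circ \subseteq K_N^\circ \subseteq (1+\tfrac{2\varepsilon}{\rho}) K^\circ.
\]
From this two-sided dilation sandwich one reads off a Hausdorff estimate: since $K^\circ$ is bounded, say $K^\circ\subseteq R'B_2^n$, the inclusions above give $\delta^H(K_N^\circ,K^\circ)\leq \tfrac{2\varepsilon}{\rho}R'$, which tends to $0$ as $\varepsilon\to 0$. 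Hence $K_N^\circ\to K^\circ$ in Hausdorff distance.

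\bigskip

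The only genuinely delicate point is the very first step — ensuring the bodies $K_N$ uniformly contain a fixed ball around the origin and are uniformly bounded, so that their polars are well-behaved convex bodies (compact, $0$ in the interior) and the dilation estimates are legitimate; once that uniform control is in place, everything else is the standard "inclusion + dilation" calculus for polars and a squeeze. Alternatively, one could phrase the whole argument through support functions, using $\vol{K^\circ}$-type formulas or the identity relating the radial function of $K^\circ$ to the support function of $K$, together with uniform convergence of support functions on $\Sp^{n-1}$; but the inclusion argument above is cleaner and is exactly the sort of statement recorded in Schneider's book, to which the paper defers.
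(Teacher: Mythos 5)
Your argument is correct, and it is essentially the standard proof: the paper itself gives no argument for this lemma, deferring entirely to Schneider's book, and the inclusion--dilation sandwich $(1+\tfrac{2\varepsilon}{\rho})^{-1}K\subseteq K_N\subseteq(1+\tfrac{\varepsilon}{\rho})K$ followed by polarity is exactly the classical route. The one step you flag but do not carry out --- that $K\subseteq K_N+\varepsilon B_2^n$ together with $\rho B_2^n\subseteq K$ forces $(\rho-\varepsilon)B_2^n\subseteq K_N$ --- does need the convexity of $K_N$ via a separating-hyperplane argument, but this is routine and your identification of it as the only delicate point is accurate.
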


Let $T$ be a triangle of volume one and centroid the origin, and  $B_p^N$ denote the unit ball in $\ell_p^N$, for $1\leq p\leq \infty$. Let $(\Omega,\mathcal{F},\Pro)$ be a probability space, and assume we have the following independent random vectors sampled uniformly according to normilized the Lebesgue measure on the given set:
    \begin{itemize}
    \setlength\itemsep{.5em}
        \item
            $\{X_i\}_{i=1}^N$ sampled in $K$;
        \item
            $\{{Y}_i\}_{i=1}^N$ sampled in $B_{\infty}^2$;
        \item
            $\{\widetilde{Y}_i\}_{i=1}^N$ sampled in $T$.
    \end{itemize}
Let $[\mathbf{X}]$, $[\mathbf{Y}]$, and $[\mathbf{\widetilde{Y}}]$ as in \S\ref{Prelim}. We now recall that under almost sure convergence, e.g. \cite{CFPP}, integration and limit operation can be interchanged, so by Lemma~\ref{Hausdorff-Convergence} one has the following almost surely convergence in $\delta^H$:

    \begin{center}
        \begin{minipage}[t]{0.5\textwidth}
        \centering
            \begin{itemize}
            \setlength\itemsep{1em}
                \item $[\mathbf{X}]B_1^N \to K$.
                \item $\dfrac{1}{N^{1/p}}[\mathbf{X}]B_q^N \to Z_pK$.
                \item $\dfrac{1}{N}[\mathbf{X}]B_{\infty}^N \to ZK$.
            \end{itemize}
        \end{minipage}%
        \begin{minipage}[t]{0.5\textwidth}
        \centering
            \begin{itemize}
            \setlength\itemsep{1em}
                \item $[\mathbf{Y}]B_1^N \to B_{\infty}^2$.
                \item $\dfrac{1}{N^{1/p}}[\mathbf{{Y}}]B_q^N \to Z_p B_{\infty}^2$.
                \item $\dfrac{1}{N}[\mathbf{{Y}}]B_{\infty}^N \to Z B_{\infty}^2$.
            \end{itemize}
        \end{minipage}
    \end{center}

Now, using Theorem~\ref{maintheorem} and Lemma~\ref{quadrilateralsmaximize}, we deduce the proofs of the stochastic forms on the plane for Mahler's Theorem \cite{Mah:1939-Minimal}, Theorem~\ref{Stochastic-Symmetric-Mahler}, and a reverse Lutwak-Zhang's inequality \cite{LutZha-1997-Blaschke}, Theorem~\ref{Stochastic-Reverse-Lutwak-Zhang}.

\begin{proof}[Proof of Theorem~\ref{Stochastic-Symmetric-Mahler}]
    Consider without loss of generality $K$ such that $\vol{K}=\vol{B_{\infty}^2}$ and notice we are able to write
        \[
            [K]_N=[\mathbf{X}]B_1^N
            \text{\hspace{1em} and \hspace{1em}}
            [B_{\infty}^2]_N=[\mathbf{Y}]B_1^N.
        \]
    By Lemma~\ref{quadrilateralsmaximize} with $C=B_1^N$ we have
        \[
            \frac{1}{\vol{K}^{r}}\E\vol{([\mathbf{X}]B_1^N)^{\circ}}^{-r} \leq \frac{1}{\vol{B_{\infty}^2}^{r}}\E\vol{([\mathbf{Y}]B_1^N)^{\circ}}^{-r}
        \]
    and inequality \eqref{Stochastic-Mahler} follows.
\end{proof}

\begin{Rem}
    From \eqref{Stochastic-Mahler} one has
        \[
            \lim\limits_{N\to\infty}\E\vol{([\mathbf{X}]B_1^N)^{\circ}}^{-r}
            \leq
            \lim\limits_{N\to\infty}\E\vol{([\mathbf{Y}]B_1^N)^{\circ}}^{-r}.
        \]
    Thus, by the continuity of the Lebesgue measure and Dominated Convergence Theorem
        \[
            \E\vol{(\lim_{N\rightarrow\infty}[\mathbf{X}]B_1^N)^\circ}^{-r}
            \leq
            \E\vol{(\lim_{N\rightarrow\infty}[\mathbf{Y}]B_1^N)^\circ}^{-r}.
        \]
    Therefore by Lemma~\ref{Hausdorff-Convergence} one has the almost sure convergence mentioned above. It follows that
        \[
            \vol{K^{\circ}}^r \geq \vol{(B_{\infty}^2)^{\circ}}^r = \vol{B_1^2}^r.
        \]
    In particular, we recover \eqref{Mahler-Conjecture} on the plane, i.e.,
        \[
            \vol{K}\vol{K^{\circ}} \geq \vol{B_{\infty}^2}\vol{B_1^2}.
        \]
\end{Rem}

\begin{proof}[Proof of Theorem~\ref{Stochastic-Reverse-Lutwak-Zhang}]
    First consider without loss of generality $K$ such that $\vol{K}=\vol{B_{\infty}^2}$. Notice for $1/p+1/q=1$, \eqref{Empirical-p-Centroid-Body} can be compare in matrix form with
        \[
            Z_{p,N}(K) = \frac{1}{N^{1/p}}[\mathbf{X}]B_q^N
            \text{\hspace{1em} and \hspace{1em}}
            Z_{p,N}(B_{\infty}^2) = \frac{1}{N^{1/p}}[\mathbf{{Y}}]B_q^N.
        \]
    Then, by Lemma~\ref{quadrilateralsmaximize} with $C=B_q^N$ one has
        \[
            \frac{1}{\vol{K}^{r}}\E\vol{([\mathbf{X}]B_q^N)^{\circ}}^{-r}
            \leq \frac{1}{\vol{B_{\infty}^2}^{r}}\E\vol{([\mathbf{\widetilde{Y}}]B_q^N)^{\circ}}^{-r},
        \]
    so it follows that
        \[
            \E\vol{\parenth{\frac{1}{N^{1/p}}[\mathbf{X}]B_q^N}^{\circ}}^{-r}
            \leq
            \E\vol{\parenth{\frac{1}{N^{1/p}}[\mathbf{{Y}}]B_q^N}^{\circ}}^{-r}
        \]
    which is \eqref{Eq-Stochastic-Reverse-Lutwak-Zhang}. Moreover, taking limits on both sides of the expression above
        \[
            \lim\limits_{N\to\infty}\E\vol{\parenth{\frac{1}{N^{1/p}}[\mathbf{X}]B_q^N}^{\circ}}^{-r} \leq \lim\limits_{N\to\infty}\E\vol{\parenth{\frac{1}{N^{1/p}}[\mathbf{{Y}}]B_q^N}^{\circ}}^{-r}.
        \]
    Inequality \eqref{Reverse-Lutwak-Zhang} follows from the continuity of the Lebesgue measure, the double application of the Dominated Convergence Theorem, and the almost sure convergence to the $p$-centroid body from the beginning of this section.
\end{proof}

\begin{Rem}

Using \eqref{p-centroid-body} for $p=1$ and \eqref{polarvolume}, one can determine the volume of the polar centroid body of $B_{\infty}^2$
    \begin{align}
    \label{squarecentroidpolar}
    \nonumber
        \vol{(ZB_{\infty}^2)^{\circ}}
        &=
        8\displaystyle\int_S\left(\int_{B_{\infty}^2} \vol{\ang{x,z}} dx\right)^{-2}dz\\
    \nonumber
        &=
        32\int_0^{\pi/2}{\parenth{\int_{-1}^1{\int_{-1}^1{\abs{x\cos{\theta}+y\sin{\theta}}}dx}dy}^{-2}}d\theta\\
        &=
        \dfrac{4\pi}{\sqrt{3}} + 6.
    \end{align}
Therefore, using \eqref{squarecentroidpolar} and \eqref{Polar-Bis-Bor} we can give an estimate for the planar Centroid volume product:
        \[
            \vol{(ZK)^{\circ}}\vol{K} \geq \frac{16\pi}{\sqrt{3}} + 24.
        \]
\end{Rem}

Lastly, to finish the section we first show in Lemma~\ref{trianglesextremizers} that triangles are the maximizers for functional \eqref{santalofunctional} which allows us to give a stochastic Mahler's Theorem on the plane for non-symmetric convex bodies.

\begin{Lemma}\label{trianglesextremizers}
    Let $T$ be a triangle, $N\geq3$, $C={\rm conv}\{e_1,\ldots,e_N\}$, and $r\geq 1$. Then
        \begin{equation*}
            W_{r}^{s\circ}(K;N;C) \leq W_{r}^{s\circ}(T;N;C)
        \end{equation*}
    for all $K\in\mathcal{K}_0^N$.
\end{Lemma}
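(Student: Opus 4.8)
The plan is to transcribe the proof of Lemma~\ref{quadrilateralsmaximize}, replacing the symmetric vertex-stripping move by a non-symmetric one that deletes a single vertex at a time, and replacing Theorem~\ref{maintheorem} (and behind it Theorem~\ref{camgroth}) by Theorem~\ref{santalotheo} (and behind it Theorem~\ref{meyreith}). So this is, step for step, the Santaló-point analogue of the argument already carried out for \eqref{mainfunctional}.

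\textbf{Reduction to polygons.} Polygons are $\delta^H$-dense among planar convex bodies, and $W_r^{s\circ}(\,\cdot\,;N;C)$ is continuous with respect to the Hausdorff metric and invariant under invertible affine maps (as recorded for the functional \eqref{santalofunctional}). Hence it suffices to bound $W_r^{s\circ}(K;N;C)$ when $K$ is a convex polygon with $m\ge 3$ vertices; and when $m=3$ there is nothing to prove, because every triangle is affinely equivalent to $T$, so $W_r^{s\circ}(K;N;C)=W_r^{s\circ}(T;N;C)$. Assume then $m\ge 4$.

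\textbf{Removing one vertex.} Let $a_1,\dots,a_m$ be the vertices of $K$ in cyclic order, let $\theta\in\Sp^1$ be parallel to the segment $[a_1,a_3]$ (so $a_1$ and $a_3$ have a common projection $Pa_1=Pa_3$ onto $\theta^\perp$), and set
\[
K_t={\rm conv}\{a_1,\ a_2+t\theta,\ a_3,\dots,a_m\}.
\]
For a suitable $\beta$ on $PK$ — affine on the projection of the triangle $a_1a_2a_3$ with value $1$ at $Pa_2$ and $0$ at $Pa_1=Pa_3$, and $\equiv 0$ elsewhere, exactly as in the proof of Lemma~\ref{quadrilateralsmaximize} — this is an RS-movement of $K$ along $\theta$, non-degenerate on an interval $[-\delta_1,\delta_2]$ with $\delta_1,\delta_2>0$; at each endpoint three consecutive vertices become collinear, so $K_{-\delta_1}$ and $K_{\delta_2}$ are polygons with at most $m-1$ vertices. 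By Theorem~\ref{santalotheo} (applied with $C={\rm conv}\{e_1,\dots,e_N\}$, $N\ge 3$, after the same harmless reparametrization of the interval used in Lemma~\ref{quadrilateralsmaximize}), the map $t\mapsto W_r^{s\circ}(K_t;N;C)$ is convex, hence bounded by the larger of its endpoint values:
\[
W_r^{s\circ}(K;N;C)\le\max\{W_r^{s\circ}(K_{-\delta_1};N;C),\ W_r^{s\circ}(K_{\delta_2};N;C)\}.
\]
Iterating strictly lowers the number of vertices at each step, so after finitely many steps we reach a triangle $T'$, and $W_r^{s\circ}(K;N;C)\le W_r^{s\circ}(T';N;C)=W_r^{s\circ}(T;N;C)$ by affine invariance. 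Undoing the polygonal approximation by continuity gives the inequality for every planar convex body $K$.

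\textbf{Main obstacle.} Two points need care. First, one must check that the single-vertex slide is genuinely an RS-movement in the sense of the paper and that $K_t$ stays a \emph{non-degenerate} shadow system on $[-\delta_1,\delta_2]$, i.e. never collapses to a segment before a vertex is absorbed; this is a short planar-convexity verification, it is where the hypothesis $m\ge 4$ is used, and it is what makes Theorem~\ref{meyreith} — hence Theorem~\ref{santalotheo} — applicable. Second, $W_r^{s\circ}$ is defined through the Santaló point, which moves both as $K_t$ deforms and as the approximating polygons converge to $K$; for this one leans on the stated facts that $W_r^{s\circ}(\,\cdot\,;N;C)$ is finite, $\delta^H$-continuous, and translation invariant. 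Modulo these two verifications, the argument is a faithful copy of the proof of Lemma~\ref{quadrilateralsmaximize}.
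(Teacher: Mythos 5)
Your proposal is correct and follows essentially the same route as the paper's own proof: slide one vertex along a direction parallel to the chord joining its neighbors as an RS-movement, invoke Theorem~\ref{santalotheo} to bound the functional by its endpoint values, iterate down to a triangle, and finish by affine invariance and $\delta^H$-continuity. The extra care you take with non-degeneracy of the shadow system and with the reduction to polygons is left implicit in the paper but does not change the argument.
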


\begin{proof}
    Let $m\geq3$, $K$ a polygon with $\{a_1, \ldots, a_m\}$ ordered clockwise vertices, $\theta$ a direction parallel to the line joining $a_1$ and $a_3$, and set
        \[
            K_t := {\rm conv}\{a_1, a_2+t\theta, a_3, \ldots, a_m\}.
        \]
    Then, there exists $\delta_1,\delta_2>0$ such that for $t\in[-\delta_1,\delta_2]$, $K_t$ is an RS-movement where $K_{-\delta_1}$ and $K_{\delta_2}$ have $m-1$ vertices. As $W^{s\circ}_{p,r}(K;N;C)$ is a convex functional, Theorem~\ref{santalotheo},
        \[
            W_{r}^{s\circ}(K;N;C) \leq \max\{ W_{r}^{s\circ}(K_{-\delta_1};N;C), W_{r}^{s\circ}(K_{\delta_2};N;C)\}
        \]
    and iterating the above procedure one has
       \[
        W_{r}^{s\circ}(K;N;C) \leq W_{r}^{s\circ}(T;N;C).
       \]
    The result follows by the invariance under linear transformations and the continuity of the functional.
\end{proof}

\begin{proof}[Proof of Theorem~\ref{Stochastic-Non-Symmetric-Mahler}]
    Let $e_1,\dots,e_N$ be the standard orthonormal basis in $\R^n$, and $\tilde{C}={\rm conv}\{e_1,\dots,e_N\}$. Consider without loss of generality $K$ such that $\vol{K}=\vol{T}=1$, then we are able to write
        \[
            [K]_N=[\mathbf{X}]\tilde{C}
            \text{\hspace{1em} and \hspace{1em}}
            [T]_N=[\mathbf{\widetilde{Y}}]\tilde{C}.
        \]
    By Lemma~\ref{trianglesextremizers} with $C=\tilde{C}$ we have
        \[
            \frac{1}{\vol{K}^{r}}\E\vol{([\mathbf{X}]\tilde{C})^{s\circ}}^{-r}
            \leq
            \frac{1}{\vol{T}^{r}}\E\vol{([\mathbf{\widetilde{Y}}]\tilde{C})^{s\circ}}^{-r}
        \]
    and inequality \eqref{Stochastic-Mahler-Santalo} follows.
\end{proof}

\begin{Rem}
    From \eqref{Stochastic-Mahler-Santalo} one has
        \[
            \lim\limits_{N\to\infty}\E\vol{([\mathbf{X}]\tilde{C})^{s\circ}}^{-r}
            \leq
            \lim\limits_{N\to\infty}\E\vol{([\mathbf{\widetilde{Y}}]\tilde{C})^{s\circ}}^{-r}.
        \]
    Thus, by the continuity of the Lebesgue measure and Dominated Convergence Theorem
        \[
            \E\vol{(\lim_{N\rightarrow\infty}[\mathbf{X}]\tilde{C})^{s\circ}}^{-r}
            \leq
            \E\vol{(\lim_{N\rightarrow\infty}[\mathbf{\widetilde{Y}}]\tilde{C})^{s\circ}}^{-r}.
        \]
    Therefore by Lemma~\ref{Hausdorff-Convergence} one has the almost sure convergence mentioned above. It follows that
        \[
            \vol{K^{s\circ}}
            \geq
            \vol{T^{s\circ}}
            =
            \frac{27}{4},
        \]
    which recovers \eqref{Mahler-Conjecture}.
\end{Rem}

\noindent {{\bf Acknowledgments} I would like to express my gratitude to my advisor, Peter Pivovarov, for his valuable feedback. His technical and editorial advice was essential to the completion of this paper. I am also grateful to the anonymous referee for providing insightful comments, detailed suggestions, and corrections leading to an improvement of the paper.}

%
% BIBLIOGRAPHY
%

\addcontentsline{toc}{section}{References}
\bibliographystyle{plain}
\bibliography{biblio}

\begin{thebibliography}{10}

\bibitem{AleFraZva:2019-Polytope}
M.~Alexander, M.~Fradelizi, and A.~Zvavitch.
\newblock Polytopes of maximal volume product.
\newblock {\em {D}iscrete \& {C}omputational {G}eometry}, 62(3):583--600, 2019.

\bibitem{BarFra:2013-Volume}
F.~Barthe and M.~Fradelizi.
\newblock The volume product of convex bodies with many hyperplane symmetries.
\newblock {\em American Journal of Mathematics}, 135(2):311--347, 2013.

\bibitem{BisBor-2001-Centroid}
T.~Bisztriczky and K.~B{\"o}r{\"o}czky.
\newblock About the centroid body and the ellipsoid of inertia.
\newblock {\em Mathematika}, 48(1-2):1--13, 2001.

\bibitem{Bla:1917}
W.~Blaschke.
\newblock \"{U}ber affine geometrie xi: L\"{o}sung des “vierpunktproblems”
  von sylvester aus der theorie der geometrischen wahrscheinlichkeiten.
\newblock {\em Leipziger Berichte}, 69:436--453, 1917.

\bibitem{Bla:1923-II}
W~Blaschke.
\newblock Vorlesungen {\"u}ber differentialgeometrie, band ii: Affine
  differentialgeometrie, bearb. v.
\newblock {\em K. Reidemeister. Berlin}, 1923.

\bibitem{BouMil:1987}
J.~Bourgain and V.~Milman.
\newblock New volume ratio properties for convex symmetric bodies in
  $\mathbb{R}^n$.
\newblock {\em Inventiones mathematicae}, 88(2):319--340, 1987.

\bibitem{Bus:1953}
H.~Busemann.
\newblock Volume in terms of concurrent cross-sections.
\newblock {\em Pacific J. Math.}, 3:1--12, 1953.

\bibitem{BusStr:1960-Area}
H.~Busemann and E.~G. Straus.
\newblock Area and normality.
\newblock {\em Pacific Journal of Mathematics}, 10(1):35--72, 1960.

\bibitem{camcolgroanote}
S.~Campi, A.~Colesanti, and P.~Gronchi.
\newblock A note on {S}ylvester's problem for random polytopes in a convex
  body.
\newblock {\em Rend. Istit. Mat. Univ. Trieste}, 31(1-2):79--94, 1999.

\bibitem{Cam-Gron:2002reverse}
S.~Campi and P.~Gronchi.
\newblock On the reverse ${L}^p$--{B}usemann-{P}etty centroid inequality.
\newblock {\em Mathematika}, 49(1-2):1--11, 2002.

\bibitem{campigronchionvolume}
S.~Campi and P.~Gronchi.
\newblock On volume product inequalities for convex sets.
\newblock {\em Proc. Amer. Math. Soc.}, 134(8):2393--2402, 2006.

\bibitem{Cam-Gron:2006volume}
S.~Campi and P.~Gronchi.
\newblock Volume inequalities for sets associated with convex bodies.
\newblock In {\em Integral geometry and convexity}, pages 1--15. World
  Scientific, 2006.

\bibitem{CFPP}
D.~Cordero-Erausquin, M.~Fradelizi, G.~Paouris, and P.~Pivovarov.
\newblock Volume of the polar of random sets and shadow systems.
\newblock {\em Math. Ann.}, 362(3-4):1305--1325, 2015.

\bibitem{DalLar:1990-Volumes}
L.~Dalla and D.~G. Larman.
\newblock Volumes of a random polytope in a convex set.
\newblock In {\em Applied geometry and discrete mathematics}, pages 175--180,
  1990.

\bibitem{FraHubMeyRol:2019Equipartitions}
M.~Fradelizi, A.~Hubard, M.~Meyer, E.~Rold{\'a}n-Pensado, and A.~Zvavitch.
\newblock Equipartitions and mahler volumes of symmetric convex bodies.
\newblock {\em arXiv preprint arXiv:1904.10765}, 2019.

\bibitem{GiaPaoVri:2014-Isotropic}
A.~Giannopoulos, G.~Paouris, and B.~H. Vritsiou.
\newblock The isotropic position and the reverse santal{\'o} inequality.
\newblock {\em Israel Journal of Mathematics}, 203(1):1--22, 2014.

\bibitem{Gro:1973-Some}
H~Groemer.
\newblock On some mean values associated with a randomly selected simplex in a
  convex set.
\newblock {\em Pacific Journal of Mathematics}, 45(2):525--533, 1973.

\bibitem{Gro:1974}
H.~Groemer.
\newblock On the mean value of the volume of a random polytope in a convex set.
\newblock {\em Arch. Math. (Basel)}, 25:86--90, 1974.

\bibitem{IriShi:2017symmetric}
H.~Iriyeh and M.~Shibata.
\newblock Symmetric {M}ahler's conjecture for the volume product in the three
  dimensional case.
\newblock {\em arXiv preprint arXiv:1706.01749}, 2017.

\bibitem{Kar:2019-Mahler}
R.~Karasev.
\newblock Mahler's conjecture for some hyperplane sections.
\newblock {\em arXiv preprint arXiv:1902.08971}, 2019.

\bibitem{KimRei:2011-Simplex}
J.~Kim and S.~Reisner.
\newblock Local minimality of the volume-product at the simplex.
\newblock {\em Mathematika}, 57(1):121--134, 2011.

\bibitem{Kup:2008}
G.~Kuperberg.
\newblock From the {M}ahler conjecture to{ G}auss linking integrals.
\newblock {\em Geometric And Functional Analysis}, 18(3):870--892, 2008.

\bibitem{LopRei:1998-Mahler}
M.~A. Lopez and S.~Reisner.
\newblock A special case of {M}ahler’s conjecture.
\newblock {\em {D}iscrete \& {C}omputational {G}eometry}, 20(2):163--177, 1998.

\bibitem{LutYanZha:2000-Lp}
E.~Lutwak, D.~Yang, and G.~Zhang.
\newblock ${L}_p$ affine isoperimetric inequalities.
\newblock {\em J. {D}ifferential {G}eom}, 56(1):111--132, 2000.

\bibitem{LutYanZha:2010-Orlicz}
E.~Lutwak, D.~Yang, and G.~Zhang.
\newblock Orlicz centroid bodies.
\newblock {\em Journal of {D}ifferential {G}eometry}, 84(2):365--387, 2010.

\bibitem{LutZha-1997-Blaschke}
E.~Lutwak and G.~Zhang.
\newblock Blaschke-{S}antal{\'o} inequalities.
\newblock {\em J. {D}ifferential {G}eom}, 47(1):1--16, 1997.

\bibitem{Mah:1939-Minimal}
K.~Mahler.
\newblock Ein minimalproblem f{\"u}r konvexe polygone.
\newblock {\em Mathematica (Zutphen) B}, 7:118--127, 1939.

\bibitem{Mec:2005-Sylvester}
Mark~W Meckes.
\newblock Sylvester’s problem for symmetric convex bodies and related
  problems.
\newblock {\em Monatshefte f{\"u}r Mathematik}, 145(4):307--319, 2005.

\bibitem{Mey:1986-Caracterisation}
M.~Meyer.
\newblock Une caract{\'e}risation volumique de certains espaces norm{\'e}s de
  dimension finie.
\newblock {\em Israel Journal of Mathematics}, 55(3):317--326, 1986.

\bibitem{meypajblaschke}
M.~Meyer and A.~Pajor.
\newblock On the {B}laschke-{S}antal\'{o} inequality.
\newblock {\em Arch. Math. (Basel)}, 55(1):82--93, 1990.

\bibitem{meyreishadow}
M.~Meyer and S.~Reisner.
\newblock Shadow systems and volumes of polar convex bodies.
\newblock {\em Mathematika}, 53(1):129--148 (2007), 2006.

\bibitem{MeyRei:2011-Polygons}
M.~Meyer and S.~Reisner.
\newblock On the volume product of polygons.
\newblock In {\em Abhandlungen aus dem {M}athematischen {S}eminar der
  {U}niversit{\"a}t {H}amburg}, volume~81, pages 93--100. Springer, 2011.

\bibitem{Naz:2012-BM}
F.~Nazarov.
\newblock The h{\"o}rmander proof of the {B}ourgain--{M}ilman theorem.
\newblock In {\em Geometric aspects of functional analysis}, pages 335--343.
  Springer, 2012.

\bibitem{NazPetRyaZva:2010}
F.~Nazarov, F.~Petrov, D.~Ryabogin, and A.~Zvavitch.
\newblock A remark on the mahler conjecture: local minimality of the unit cube.
\newblock {\em Duke Mathematical Journal}, 154(3):419--430, 2010.

\bibitem{paopivprob}
G.~Paouris and P.~Pivovarov.
\newblock A probabilistic take on isoperimetric-type inequalities.
\newblock {\em Adv. Math.}, 230(3):1402--1422, 2012.

\bibitem{Pet:1961-Centroid}
C.~M. Petty.
\newblock Centroid surfaces.
\newblock {\em Pacific Journal of Mathematics}, 11(4):1535--1547, 1961.

\bibitem{Pet:1971-Isoperimetric}
C.~M. Petty.
\newblock Isoperimetric problems.
\newblock {\em {P}roceedings of the {C}onference on {C}onvexity and
  {C}ombinatorial {G}eometry ({U}niv. {O}klahoma, {N}orman, {O}kla.)}, pages
  26--41, 1971.

\bibitem{Pis:1999-Volume}
G.~Pisier.
\newblock {\em The volume of convex bodies and {B}anach space geometry},
  volume~94.
\newblock Cambridge {U}niversity {P}ress, 1999.

\bibitem{Rei:1986-Zonoids}
S.~Reisner.
\newblock Zonoids with minimal volume-product.
\newblock {\em Mathematische Zeitschrift}, 192(3):339--346, 1986.

\bibitem{ReiSchWer:2012-Mahler}
S.~Reisner, C.~Sch{\"u}tt, and E.~M. Werner.
\newblock Mahler's conjecture and curvature.
\newblock {\em {I}nternational {M}athematics {R}esearch {N}otices},
  2012(1):1--16, 2012.

\bibitem{rogsheext}
C.~A. Rogers and G.~C. Shephard.
\newblock Some extremal problems for convex bodies.
\newblock {\em Mathematika}, 5:93--102, 1958.

\bibitem{Saint:1980}
J.~Saint-Raymond.
\newblock On the volume of symmetrical convex bodies.
\newblock {\em Tutorial of Analysis}, 81, 1980.

\bibitem{San:1949}
L.~A Santal{\'o}.
\newblock Un invariante afin para los cuerpos convexos del espacio de n
  dimensiones.
\newblock {\em Portugaliae Mathematica}, 8(4):155--161, 1949.

\bibitem{Sar:2010-Characterizations}
C.~Saroglou.
\newblock Characterizations of extremals for some functionals on convex bodies.
\newblock {\em Canadian {J}ournal of {M}athematics}, 62(6):1404--1418, 2010.

\bibitem{Sar:2013-shadow}
C.~Saroglou.
\newblock Shadow systems: remarks and extensions.
\newblock {\em Archiv der Mathematik}, 100(4):389--399, 2013.

\bibitem{Schneider-Book}
R.~Schneider.
\newblock {\em Convex bodies: the {B}runn-{M}inkowski theory}, volume 151 of
  {\em Encyclopedia of Mathematics and its Applications}.
\newblock Cambridge University Press, Cambridge, expanded edition, 2014.

\bibitem{Shephard:1964shadow}
G.~C. Shephard.
\newblock Shadow systems of convex sets.
\newblock {\em Israel Journal of Mathematics}, 2(4):229--236, 1964.

\bibitem{Stan:2009-Mahler}
A.~Stancu.
\newblock Two volume product inequalities and their applications.
\newblock {\em {C}anadian {M}athematical {B}ulletin}, 52(3):464--472, 2009.

\bibitem{Syl:1864}
J.~Sylvester.
\newblock {Question 1491}.
\newblock The Educational Times, April 1864.

\end{thebibliography}

\end{document}